\newtheorem{thm}{Theorem}[section]
\newtheorem{lem}{Lemma}[section]
\newtheorem{prop}[lem]{Proposition}
\newtheorem{cor}[lem]{Corollary}
\newtheorem{conj}{Conjecture}[section]
\newtheorem{defn}{Definition}[section]
\theoremstyle{definition}
\theoremstyle{remark}
\DeclareMathOperator{\Tr}{Trace}
\DeclareMathOperator{\Frob}{Frob}
\DeclareMathOperator{\End}{End}
\DeclareMathOperator{\Aut}{Aut}
\DeclareMathOperator{\GL}{GL}
\DeclareMathOperator{\SL}{SL}
\DeclareMathOperator{\Res}{Res}
\DeclareMathOperator{\Gal}{Gal}
\DeclareMathOperator{\norm}{Norm}
\newcommand{\Q}{{\mathbb Q}}
\newcommand{\Z}{{\mathbb Z}}
\newcommand{\C}{{\mathbb C}}
\newcommand{\R}{{\mathbb R}}
\newcommand{\F}{{\mathbb F}}
\newcommand{\cH}{\mathcal{H}}
\newcommand{\cM}{\mathcal{M}}
\newcommand{\cN}{\mathcal{N}}
\newcommand{\cO}{\mathcal{O}}
\newcommand{\cW}{\mathcal W}
\newcommand{\cm}{\mathfrak{m}}
\newcommand{\ff}{\mathfrak{f}}
\newcommand{\fq}{\mathfrak{q}}
\newcommand{\mP}{\mathfrak{P}}
\newcommand{\ml}{\mathfrak{l}}
\newcommand{\Sprime}{S^{\prime}}
\newcommand{\Lprime}{L^{\prime}}
\newcommand{\rhobar}{\overline{\rho}}
\newcommand{\hooklongrightarrow}{\lhook\joinrel\longrightarrow}
\begin{document}


\baselineskip=17pt


\title[]{Irreducible binary cubics and the generalized superelliptic equation over number fields}

\author{George C. \c Turca\c s}
\address{Mathematics Institute\\
	University of Warwick\\
	Coventry\\
	CV4 7AL \\
	United Kingdom}

\email{g.c.turcas@warwick.ac.uk}
\thanks{The author is supported by
	an EPSRC PhD studentship. }

\date{}

\begin{abstract}
	For a large class (heuristically most) of irreducible binary cubic forms $F(x,y) \in \Z[x,y]$, Bennett and Dahmen proved that the generalized superelliptic equation $F(x,y)=z^l$ has at most finitely many solutions in $x,y \in \Z$ coprime, $z \in \Z$ and exponent $l \in \Z_{\geq 4} $. Their proof uses, among other ingredients, modularity of certain mod $l$ Galois representations and Ribet's level lowering theorem. The aim of this paper is to treat the same problem for binary cubics with coefficients in $\cO_K$, the ring of integers of an arbitrary number field $K$, using by now well-documented modularity conjectures. 
\end{abstract}

\subjclass[2010]{Primary 11G05, 11F80; Secondary 11F03, 11F75}

\keywords{Galois representation, Superelliptic equation, Serre modularity}

\maketitle

\section{Introduction}
In their extraordinary paper Bennett and Dahmen \cite{bendah} proved that for a large class of binary forms $F \in \Z[X,Y]$ of degrees $3,4,6$ and $12$, including \lq\lq most" cubic forms (see \cite{bendah}*{Section 12}), the generalized superelliptic equation $F(x,y)=z^l$ has finitely many solutions for $x,y,z \in \Z$, $\gcd(x,y)=1$ and $l\geq\max\{2, 7-\deg F\}$ integer.
To be precise, by attaching a family of Frey-Hellegouarch curves to putative solutions of the aforementioned equation and making essential use of modularity and level-lowering theorems due to Breuil, Conrad, Diamond, Taylor and respectively Ribet, they prove that no such solutions exists for $l$ big enough. Darmon and Granville \cite{DarmonGran} gave a descent argument and made use of Falting's Theorem to conclude that for fixed values of $l$, the equation $F(x,y)=z^l$ has finitely many solutions in coprime integers $x,y$. Together these imply the result stated above. 

Modular methods are undoubtedly an extremely powerful tool for proving that certain Diophantine equations have no solutions and, in some cases, finding the set of all solutions to these equations over $\Z$ (or $\Q$). Some number theorists are therefore interested in extending these methods over more general number fields. Such attempts were successfully carried out for the Fermat equation over certain totally real number fields by Jarvis \cite{jarvis2004fermat} and by Freitas and Siksek \cite{FreitasSiksek}, \cite{AFreitasSiksek}. These rely essentially on modularity lifting theorems over totally real fields due to Barnett-Lamb, Breuil, Diamond, Gee, Geraghty, Kisin, Skinner, Taylor, Wiles and others.

On the other hand, modularity of elliptic curves over number fields with complex embeddings is highly conjectural. Nevertheless, assuming by now well-documented conjectures in the Langlands programme, \c{S}eng\"{u}n and Siksek \cite{ASiksek} proved an asymptotic version of Fermat's Last Theorem over infinitely many general number fields.

In the spirit of \cite{ASiksek}, the purpose of this work is extend some of the results of Bennett and Dahmen \cite{bendah} to the general number field setting and to highlight the additional challenges that arise in this context.

Fix once and for all an algebraic closure $\overline{\Q}$ of $\Q$. Throughout, $l$ denotes a rational prime. Given a number field $K \subset \overline{\Q}$, we denote by $\cO_K$ its ring of integers and by $G_K = \Gal(\overline \Q/K)$ its absolute Galois group. To keep this introduction self-contained we postpone for later sections the precise statements of the two conjectures we assume. Instead, we only indicate briefly what they are.

\begin{itemize}
	\item Conjecture \ref{conj:Serre} is a version of Serre's modularity conjecture for odd, irreducible, continuous $2$-dimensional mod $l$ representations of $G_K$ that are finite flat at every prime over $l$.	
	\item Conjecture \ref{conj:ES}, sometimes referred to as \textit{Eichler-Shimura}, is part of the Langlands Programme (see \cite{taylor}) and relates weight $2$ newforms (for $\GL_2$) over $K$ that have integer Hecke eigenvalues to elliptic or fake elliptic curves over $K$.
\end{itemize} 

Before presenting our main results, we have to set up some notation. Given a number field $K$, it is known that every class in its ideal class group contains infinitely many prime ideals. If $c_1, \dots, c_h$ are the ideal classes of $K$, for every $i \in \{1, \dots, h \}$ we choose a prime ideal $\cm_i \subset \cO_K$ of smallest possible norm, such that $\cm_i \nmid 2$ and $\cm_i$ belongs to the class $c_i$. We fix the set
\begin{equation} \label{cH}
\cH_K := \left\{ \begin{array}{l} \emptyset, \text{ if } h=1 \\
\{ \cm_1, \dots, \cm_h \}, \text{ if } h \geq{2} \end{array}. \right.
\end{equation} 
Given an irreducible binary cubic $F \in \cO_K[X,Y]$ of discriminant $\Delta_F$ (one could work in greater generality and choose $F$ to be a Klein form, see \cite{bendah}), we denote by
\begin{equation}
S_{F} := \cH_K \cup \{ \text{prime ideals dividing } 2\Delta_F\} \cup \{ \text{real infinite places of } K\}.
\end{equation}
This set depends on the form $F$ (and of course, on the number field $K$).

A large part of the present paper is dedicated to proving the following result.

\begin{thm} \label{GenNf} Let $K$ be a number field for which Conjecture \ref{conj:Serre} and Conjecture \ref{conj:ES} hold. Consider $F(x,y)= \alpha_0 x^3+ \alpha_1 x^2y + \alpha_2xy^2+ \alpha_3 y^3  \in \cO_K[x,y]$ an irreducible binary cubic form such that there exists a prime ideal $\fq \mid \mid \Delta_F$ and $\fq \nmid (2 \alpha_0)$. If the Thue-Mahler equation
	\begin{equation} \label{unitseq}
	F(x,y) \in \mathcal O_{K,S_F}^{*}
	\end{equation}
	has no solutions in $x,y \in \cO_K$, then there exists a constant $A_{F} >0$ such that for all
	rational primes $l > A_{F}$ the superelliptic equation
	\begin{equation} \label{superelliptic}
	F(x,y) = z^l
	\end{equation} 
	does not have solutions in $x,y,z \in \cO_K$ such that $\gcd(x,y,z)$ is supported on the primes in $S_F$ and $\fq \nmid z$. 
\end{thm}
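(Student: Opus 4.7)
My plan is to adapt the Bennett--Dahmen strategy, replacing the unconditional modularity and level-lowering theorems available over $\Q$ by Conjectures~\ref{conj:Serre} and~\ref{conj:ES}. Suppose $(x,y,z)\in\cO_K^3$ is a putative solution to $F(x,y)=z^l$ satisfying the hypotheses. Following \cite{bendah}*{\S3} (phrased there over $\Z$ but going through verbatim over $\cO_K$), one attaches to $(x,y)$ a Frey--Hellegouarch elliptic curve $E=E_{(x,y)}/K$ built from the cubic covariants of $F$, whose minimal discriminant at a prime $\fp\notin S_F$ has $\fp$-valuation equal to a bounded quantity plus $2l\cdot v_\fp(z)$. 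Consequently the conductor of $E$ outside primes over $l$ is supported on $S_F\cup\{\fp:\fp\mid z\}$. The hypotheses $\fq\mid\mid\Delta_F$ and $\fq\nmid 2\alpha_0$ give $E$ multiplicative reduction at $\fq$ with $v_\fq(\Delta_E^{\min})=1$; and, excluding a first explicit finite set of primes $l$ depending only on $F$, a direct case analysis at primes $\lambda\mid l$ shows that $E$ has good or multiplicative reduction at each $\lambda$, so $\overline{\rho}_{E,l}|_{G_{K_\lambda}}$ is finite flat.

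For $l$ above a second threshold, standard isogeny bounds over number fields (augmented by arguments tailored to the Frey construction) make $\overline{\rho}_{E,l}$ absolutely irreducible, and Conjecture~\ref{conj:Serre} then produces a mod-$l$ eigenform $\theta$ over $K$ of parallel weight $2$ realizing $\overline{\rho}_{E,l}$. At each prime $\fp\mid z$ with $\fp\notin S_F$, the combination of multiplicative reduction of $E$ and the divisibility $l\mid v_\fp(\Delta_E^{\min})$ makes inertia act trivially, so the optimal level of $\theta$ is an ideal $\cN$ supported on $S_F$ and divisible by $\fq$. Because the space of weight-$2$, level-$\cN$ eigenforms over $K$ is finite-dimensional, $\theta$ belongs to a finite list of possibilities; by enlarging $l$ past the finitely many congruence primes between these candidates I may replace $\theta$ by the mod-$l$ reduction of a characteristic-zero eigenform $\mathfrak{f}$ of level $\cN$ with integer Hecke eigenvalues.

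Conjecture~\ref{conj:ES} now attaches to $\mathfrak{f}$ either an elliptic curve $E'/K$ or a fake elliptic curve, of conductor $\cN$, with $\overline{\rho}_{E',l}\cong\overline{\rho}_{E,l}$. Since $v_\fq(\Delta_E^{\min})=1$ is coprime to $l$, the image of $\fq$-inertia in $\overline{\rho}_{E,l}$ is nontrivial, and matching with $\overline{\rho}_{E',l}$ forces $E'$ (or the fake elliptic curve) to be bad at $\fq$; the Frey/level-lowering prescription of conductor exponent $1$ at $\fq$ then pins the reduction down as multiplicative. The main obstacle, and the step where the Thue--Mahler hypothesis finally enters, is to turn the mere existence of such an $E'/K$ (multiplicative at $\fq$, good outside $S_F$) into a genuine solution of~\eqref{unitseq}. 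My plan is to choose a global $\cO_K$-integral Weierstrass model for $E'$---the auxiliary primes in $\cH_K$ from~\eqref{cH} are tailored precisely to absorb the $\cO_K^\times$-obstruction coming from a possibly non-trivial class group and to ensure that such a model with $S_F$-unit discriminant exists---and to read off from its coefficients a pair $(x',y')\in\cO_K^2$ with $F(x',y')\in\cO_{K,S_F}^\ast$, contradicting the hypothesis. The fake elliptic curve alternative is ruled out similarly, by noting that the rational $2$-isogeny structure inherited by $\overline{\rho}_{E,l}$ from the cubic Frey construction obstructs quaternionic multiplication on the putative associated abelian surface. Taking $A_F$ to be the maximum of all the thresholds produced along the way completes the proof.
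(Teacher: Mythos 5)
Your overall architecture follows the paper, but two steps are substantively wrong, one of which is the crux of the whole argument.

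First, the fake-elliptic-curve alternative. You claim to rule it out by ``noting that the rational $2$-isogeny structure inherited by $\overline{\rho}_{E,l}$ from the cubic Frey construction obstructs quaternionic multiplication.'' This premise is false: for the Bennett--Dahmen cubic Frey curve, the mod-$2$ representation $\overline{\rho}_{E,2}$ is \emph{irreducible} precisely because $F$ is an irreducible cubic (see \cite{bendah}*{Proposition 6.8}); there is no rational $2$-isogeny. The paper's mechanism is entirely different and uses the prime $\fq$: one computes $v_\fq(j(x_0,y_0))=-1$, which forces $E$ to have potentially multiplicative reduction at $\fq$, so for $l$ large (specifically $l\nmid v_{\fq_L}(j)$) the image $\overline{\rho}_{E,l}(I_\fq)$ has order divisible by $l$. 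By contrast, a fake elliptic curve has potentially good reduction everywhere with $\#\overline{\rho}_{A,l}(I_\fq)\leq 24$ (Proposition~\ref{fakeinertia}), so for $l>24$ the two representations are incompatible. Your proposed route cannot be repaired along the line you sketched.

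Second, and more fundamentally, the passage from the elliptic curve $E'$ furnished by Conjecture~\ref{conj:ES} to a solution of the Thue--Mahler equation~\eqref{unitseq} is missing the key intermediate step. You write that you will ``choose a global $\cO_K$-integral Weierstrass model for $E'$ \dots and read off from its coefficients a pair $(x',y')$ with $F(x',y')\in\cO_{K,S_F}^*$.'' There is no such recipe: an arbitrary elliptic curve of small conductor has no reason to lie in the Frey--Hellegouarch family attached to $F$, and nothing in your argument ties $E'$ to $F$. The paper closes this gap by establishing that $E'[2]\cong E[2]$ as $G_K$-modules (Proposition~\ref{propcheb}, via effective Chebotarev and the Hasse bound, valid once $l$ exceeds an explicit threshold), and then invoking \cite{bendah}*{Proposition 6.2}: a curve $2$-congruent to a member of the cubic Frey family is itself isomorphic to some $E_{x_1,y_1}$ with $x_1,y_1\in K$. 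Only then does Proposition~\ref{notchangeN} (scaling so that $\gcd(x_1,y_1)\in\cH_K$) and Lemma~\ref{conds} produce $F(x_1,y_1)\in\cO_{K,S_F}^*$, the sought contradiction. Without the $2$-congruence step your conclusion does not follow; in your write-up the role of $\cH_K$ is also misstated, since it enters only in this final scaling, not in choosing a Weierstrass model.

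A smaller discrepancy: your appeal to ``standard isogeny bounds'' for absolute irreducibility of $\overline{\rho}_{E,l}$ is not what the paper does. The paper restricts to the Galois closure $L$ of $K$, exploits potentially multiplicative reduction at a prime above $\fq$ to apply \cite{ASiksek}*{Proposition 6.1}, and then upgrades to surjectivity over $L$ (hence over $K$) via the inertia-order-$l$ element and the Weil pairing. This yields an explicit constant. Isogeny-bound arguments would need separate justification here and would likely be less effective.
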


Proposition 2.1 of Darmon-Granville \cite{DarmonGran} implies that for any fixed value of $l \geq 4$, equation \eqref{superelliptic} has finitely many \textit{proper solutions} $x,y,z \in \cO_K$. The authors of \textit{loc.\ cit.} introduce the notion of \textit{proper solutions} to exclude the possibility of generating an infinite number in the following way. Suppose $x,y,z \in \cO_K$ are a solution to \eqref{superelliptic} and $\xi \in \cO_K^{\times}$ be a generator of the unit group. Then $\xi^{n \cdot l} x, \xi^{n \cdot l} y, \xi^{3 \cdot n} z$ for all $n \in \mathbb N$ will be an infinite family of integral solutions to our generalized superelliptic equation. A proper solution is, in fact, an equivalence class of solutions to \eqref{superelliptic} such that $\gcd(x,y)$ divides some a priori fixed ideal. Two such solutions are equivalent if we can obtain one from the other via a trivial action of the unit group $\cO_K^{\times}$.

\begin{cor}
	Let $K$ and $F$ satisfy all the hypothesis of Theorem \ref{GenNf}. The superelliptic equation
	$F(x,y)=z^l$ has finitely many \textit{proper solutions} in integers $l \geq 4$ and $x,y,z \in \cO_K$ such that $\fq \nmid z$ and the ideal $\gcd(x,y,z)$ is supported on the primes in $S_F$.	
\end{cor}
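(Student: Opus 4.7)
The plan is to split the proof into the two ranges of exponents dictated by the two tools already available. First, Theorem \ref{GenNf} produces a constant $A_F>0$ such that for every prime $l>A_F$, the superelliptic equation $F(x,y)=z^l$ has no solutions in $\cO_K$ satisfying the constraints $\fq \nmid z$ and $\gcd(x,y,z)$ supported on $S_F$. This immediately handles the entire infinite tail of the exponent range, leaving only finitely many possible values of $l$ to worry about, namely those integers $l$ with $4 \leq l \leq A_F$.

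Next, for each fixed exponent $l$ in this bounded range, I would invoke Proposition 2.1 of Darmon--Granville \cite{DarmonGran}, which was cited in the paragraph preceding the corollary. That result precisely asserts that for any given $l \geq 4$ the equation $F(x,y)=z^l$ admits only finitely many proper solutions $x,y,z \in \cO_K$ whose $\gcd$ is supported on a fixed finite set of primes (in our case, the primes in $S_F$). One should check that the notion of ``proper solution'' appearing in the corollary matches the one used in \textit{loc.\ cit.}, which is just an equivalence class of triples $(x,y,z)$ modulo the natural unit action $\xi \mapsto (\xi^l x,\xi^l y,\xi^3 z)$.

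Finally, I would combine the two cases. The union over $l > A_F$ is empty by Theorem \ref{GenNf}; the union over $4 \leq l \leq A_F$ is a finite union of finite sets by Darmon--Granville. The additional constraint $\fq \nmid z$ only restricts the set further, so the total number of proper solutions is finite. There is no real obstacle here: the corollary is essentially a formal consequence of combining the theorem (which deals with the non-uniformity in $l$) with the fixed-$l$ finiteness result from \cite{DarmonGran}. The only subtlety worth spelling out carefully is verifying that the two definitions of ``proper solution'' agree, and that the hypothesis on $\gcd(x,y,z)$ being supported on $S_F$ is compatible with the finite set of primes in the statement of \cite{DarmonGran}*{Proposition 2.1}.
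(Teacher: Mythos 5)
The key weakness in your argument is the assertion that Theorem \ref{GenNf} ``handles the entire infinite tail of the exponent range'' and leaves only $4 \le l \le A_F$. Theorem \ref{GenNf} rules out solutions only for \emph{prime} exponents $l > A_F$, whereas the corollary is over \emph{integers} $l \ge 4$. There remain infinitely many composite $l > A_F$ all of whose prime factors are $\le A_F$ (for instance $l = 2^k$ for $k$ large), and these are covered neither by Theorem \ref{GenNf} directly nor by your finite union over $4 \le l \le A_F$ of the Darmon--Granville finiteness statement. So as written the proof does not establish that only finitely many exponents contribute.

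The gap can be filled by a standard reduction, but it does require an argument. Observe that every integer $l \ge 4$ is divisible by some $m$ in the set $M := \{4,6,9\} \cup \{p \ \text{prime} : p \ge 5\}$ (if $l$ has a prime factor $\ge 5$ take that; otherwise $l=2^a3^b$ and either $4\mid l$, or $9\mid l$, or $l=6$). A solution $(x,y,z)$ of $F(x,y)=z^l$ gives a solution $(x,y,z^{l/m})$ of $F(x,y)=w^m$, and one checks that the side conditions $\fq \nmid z$ and $\gcd(x,y,z)$ supported on $S_F$ pass to $(x,y,z^{l/m})$. If $m > A_F$ then $m$ is prime and Theorem \ref{GenNf} forbids a solution; otherwise $m$ lies in the finite set $M \cap [4,A_F]$ and Darmon--Granville gives finitely many proper solutions for each such $m$. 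Finally one must note that this reduction is finite-to-one: since the hypothesis rules out $F(x,y)\in\cO_{K,S_F}^{*}$, there is a prime $\mP\notin S_F$ with $v_{\mP}(F(x,y))\ge 1$, hence $l \mid v_{\mP}(F(x,y))$ bounds $l$, and for each such $l$ the value of $z$ is determined up to roots of unity. Apart from this omission, the rest of your plan --- split according to $A_F$, invoke Darmon--Granville for the small exponents, and reconcile the two notions of proper solution --- is exactly the approach the paper has in mind.
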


We remark that specializing to number fields of small degree and trivial class group, one could carry the proof of Theorem~\ref{GenNf} and effectively compute the constant $A_F$. In particularly fortuitous situations, one could even find oneself in positions where Conjecture \ref{conj:ES} is known to hold and therefore producing special cases of Theorem~\ref{GenNf}  that only depend on Conjecture \ref{conj:Serre}. This is emphasized in \cite{turcas2018}, where the author worked on Fermat's equation over quadratic imaginary number fields. On the other hand, the finiteness result of Darmon and Granville is obtained by appealing to Falting's theorem, hence not giving any information about the number of \textit{proper solutions} needed for making the above corollary effective.  

Over totally real fields, instead of using Serre's conjecture (see Conjecture \ref{conj:Serre}) we can take advantage of modularity theorems and prove the following more general result.

\begin{thm} \label{treal} Let $K$ be a totally real Galois number field for which Conjecture \ref{conj:ES} holds and $F \in \cO_K[x,y]$ an irreducible binary cubic. If the Thue-Mahler equation \eqref{unitseq} does not have solutions in $x,y \in \cO_K$, then there exists a constant $A_F >0$ such that for all rational primes $l>A_F$, the superelliptic equation \eqref{superelliptic} does not have solutions in $x,y,z \in \cO_K$ such that the $\gcd(x,y,z)$ is supported only on primes in $S_F$.	
\end{thm}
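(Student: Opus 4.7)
The plan is to follow the same blueprint as the proof of Theorem~\ref{GenNf}, but to replace the invocation of Conjecture~\ref{conj:Serre} by the modularity and modularity-lifting theorems now available over totally real fields. Given a putative solution $(x,y,z)\in\cO_K^3$ to $F(x,y)=z^l$ with $\gcd(x,y,z)$ supported on $S_F$, I would attach to it the Frey--Hellegouarch elliptic curve $E=E_{x,y,z}/K$ already constructed earlier in the paper. By design, the minimal discriminant of $E$ has valuation divisible by $l$ at every prime of multiplicative reduction lying outside $S_F$, and the conductor of $E$ is divisible only by primes in $S_F$ together with primes above $l$.

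For $l$ larger than a constant depending only on $K$ and $F$, standard bounds on images of Galois for elliptic curves over $K$ show that $\bar\rho_{E,l}\colon G_K \to \GL_2(\bar\F_l)$ is absolutely irreducible. Modularity of elliptic curves over totally real Galois fields---due to Freitas--Le Hung--Siksek, Thorne, Derickx--Najman--Siksek and others, combined with the modularity-lifting theorems of Barnett-Lamb, Breuil, Diamond, Gee, Geraghty, Kisin, Skinner, Taylor and Wiles---then gives that $E$ is modular by a Hilbert newform of parallel weight~$2$ over $K$. Invoking Fujiwara--Jarvis--Rajaei level lowering, together with finite flatness at primes above $l$, strips from the level every multiplicative-reduction prime outside $S_F$, so $\bar\rho_{E,l}$ arises from a Hilbert newform $\gf$ of parallel weight~$2$ and level $\gN$ supported on $S_F$. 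Since $K$ and $S_F$ are fixed, only finitely many such $\gf$ occur.

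For each surviving $\gf$, Conjecture~\ref{conj:ES} attaches an elliptic curve or a fake elliptic curve $E'/K$ of conductor dividing $\gN$. A standard elimination argument, comparing traces of Frobenius of $\bar\rho_{E,l}$ and $\bar\rho_{\gf,\mathfrak{l}}$ at several auxiliary primes of good reduction, then either forces a numerical incompatibility that rules out this $\gf$ for $l$ beyond an effective bound, or traces the identification back through the Frey-curve recipe to yield $F(x,y)\in\cO_{K,S_F}^*$, namely a solution to the Thue--Mahler equation \eqref{unitseq}. Since the latter is assumed to have no solution, the result follows with $A_F$ equal to the maximum of all the thresholds produced along the way.

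I expect the main difficulty to be the elimination step. Theorem~\ref{GenNf} leverages the assumption $\fq\mid\mid\Delta_F$ with $\fq\nmid(2\alpha_0)$ to force multiplicative reduction at $\fq$ and thereby drastically cut down the list of residual newforms; without this hypothesis one has to rely on the sharper modularity and level-lowering results proper to the totally real Galois setting, and show that the finitely many possibilities of fake elliptic type either fail the Frobenius matching for large $l$ or, when they do match, still admit an unravelling through the Frey-curve construction that produces a genuine solution of \eqref{unitseq}.
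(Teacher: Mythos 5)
There is a genuine gap in your modularity step. You invoke ``modularity of elliptic curves over totally real Galois fields'' as if it were an established theorem, citing Freitas--Le Hung--Siksek, Thorne, Derickx--Najman--Siksek. Those works establish modularity over \emph{specific} families of totally real fields (real quadratics, $\Q(\zeta_{5^n})^+$, totally real cubics), not over an arbitrary totally real Galois $K$; no such blanket theorem exists. What the paper actually uses is the much weaker statement (\cite{Freitas2015}*{Theorem 5}) that over a fixed totally real $K$ there are only \emph{finitely many} $\overline K$-isomorphism classes of non-modular elliptic curves, and then proves Lemma~\ref{modular}: if some prime $\mP\notin S_F$ has $v_{\mP}(F(x,y))$ large enough (which, since $F(x_0,y_0)=z_0^l$ and $\mP\mid z_0$, happens once $l$ is large), the $j$-invariant $j(x,y)$ escapes the finite exceptional set $\cW_K$, hence $E_{x,y}$ is modular. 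The price is that $\cW_K$ comes from Faltings' theorem, so this step is the sole source of ineffectivity in $A_F$ --- a feature your sketch loses track of. Note also that it is precisely here that the Thue--Mahler hypothesis first enters: it guarantees the existence of a prime $\mP\notin S_F$ dividing $z_0$, which is needed to apply Lemma~\ref{modular} at all.

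A secondary error: your elimination step discusses ``possibilities of fake elliptic type'' over $K$. Since $K$ is totally real, Conjecture~\ref{conj:ES} produces only honest elliptic curves (fake elliptic curves exist only over totally complex fields); there is nothing to rule out here, which is exactly why Theorem~\ref{treal} can drop the $\fq\mid\mid\Delta_F$, $\fq\nmid(2\alpha_0)$ hypothesis of Theorem~\ref{GenNf}. That hypothesis in the general-$K$ theorem is used to force $l\mid\#\bar\rho_{E,l}(I_{\fq})$, which both yields irreducibility of $\bar\rho_{E,l}$ and, via Proposition~\ref{fakeinertia}, excludes fake elliptic curves (whose inertia images have order $\leq 24$); it has nothing to do with shrinking the list of newforms. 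Finally, your appeal to ``standard bounds on images of Galois'' for irreducibility glosses over the reason the Galois assumption on $K$ appears in the statement: the paper uses Theorem~2 of \cite{FreSikCriteria}, whose effective irreducibility criterion for $\bar\rho_{E,l}$ over semistable curves requires $K$ Galois. The remaining architecture of your sketch --- Fujiwara--Jarvis--Rajaei level lowering, a finite set of levels and forms, rational Hecke eigenvalues for $l$ large, Conjecture~\ref{conj:ES}, a mod~$2$ comparison landing back in the Frey family and a Thue--Mahler solution --- matches the paper.
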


\noindent \textbf{Remark.} The assumption that $K$ is Galois is needed in order to prove that, for large $l$, a certain mod $l$ Galois representation is irreducible. If the number field is totally real but not Galois, it will become clear from our proof that an analogous statement to Theorem \ref{GenNf} holds independently of Conjecture \ref{conj:Serre} and assuming only Conjecture \ref{conj:ES}. In general, it is not possible to compute the constant $A_F$ introduced in the theorem above and the reason will be explained in Section \ref{secreal}.

\medskip

The insolubility of \eqref{unitseq} seems at first look very restrictive. As pointed out in \cite{bendah}, even when $K = \Q$ one has to go up to discriminant $|\Delta_F|=2063$ to find the first example of a binary cubic where the $S_F$-units equation is insoluble. We refer to Section 9 of the respective paper for an example of an infinite family of rational binary cubics satisfying the hypothesis of this theorem. In the same paper, the authors give a heuristic argument for the fact that Theorem \ref{treal} is applicable to a density one subset of the set of all rational cubic forms.

An analogous corollary to the one above follows from the last theorem when combined with the aforementioned results of \cite{DarmonGran}.

\subsection{Differences between general and totally real number fields}
Although sharing similar hypothesis and conclusions, the proofs of Theorems \ref{GenNf} and \ref{treal} are fundamentally different. We highlight here some of the most important differences.
\begin{enumerate}
	\item For a general number field $K$, Serre's modularity conjecture relates a representation $G_K \to \GL_2(\F_l)$, satisfying certain conditions, to a mod $l$ eigenform of weight $2$ over $K$. If $K$ is totally real such a mod $l$ eigenform lifts to a complex eigenform over $K$, but this is not generally the case for a number field with complex embeddings. We proceed as in \cite{ASiksek}, showing that if $l$ is sufficiently large then all mod $l$ eigenforms lift. This step makes the computation of the constant $A_F$ in Theorem \ref{GenNf} not feasible in general. To write down a formula for this constant, we would need bounds for the size of torsion subgroups of integral cohomology groups associated to locally symmetric spaces (see Section \ref{eigenfgl2}). It is maybe just worth remarking that if one chooses $K$ totally real in Theorem \ref{GenNf}, then one could compute the constant $A_F$ explicitly.
	\item In order to make the required hypothesis of Theorem \ref{treal} more general, we do not work with Serre's modularity conjecture but instead we use the known fact that for a fixed totally real field $K$, all but finitely many $\overline{K}$-isomorphism classes of elliptic curves defined over $K$ are modular (see \cite{Freitas2015}*{Theorem 5}). By increasing the value of $l$, we can make sure that the $j$-invariants of our family of Frey-Hellegouarch curves are not among the $j$-invariants of the non-modular curves. Unfortunately, this step makes the constant $A_F$ in Theorem \ref{treal} ineffective, the reason being that Theorem 5 in \text{loc.\ cit.} matches certain non-modular curves with rational points on a finite set of curves of genus $>1$ and then appeals to Faltings' theorem to deduce that there are only finitely many of them.      
	\item If $K$ has a real embedding, then a weight $2$ complex eigenform over $K$ with rational eigenvalues conjecturally (see Conjecture \ref{conj:ES}) corresponds to an elliptic curve over $K$. However, for a general number field $K$, the same conjecture predicts that such an eigenform corresponds to either an elliptic curves or a \textit{fake elliptic curve}. Following the recipe of \cite{ASiksek}, we show that the images of inertia at some fixed prime dividing $\Delta_F$ of the mod $l$ representation of our Frey-Hellegouarch curves are incompatible with images of inertia for fake elliptic curves, thereby eliminating the second possibility in our setting.      
\end{enumerate}

\section{Eigenforms for $\GL_2$ over number fields and Serre's modularity conjecture}
\label{eigenfgl2}
The exposition in this section follows closely the lines of \cite{jonessen,ASiksek}. A celebrated theorem of Khare and Wintenberger connects certain $2$-dimensional continuous representations of $G_{\Q}$ into $\GL_2(\F_{l})$ with classical modular forms of the hyperbolic plane $\cH_2$. In this section, we are about to discuss a conjecture that aims to generalise the theorem of Khare and Wintenberger in a way that is going to be soon clarified. 

Let $K$ be a number field with signature $(r,s)$ and let $G:= \GL_2(K \otimes \mathbb R)$, a real Lie group. If we denote by $A$ the diagonal embedding of $\mathbb R_{>0}$ into $G$ and by $M$ the maximal compact subgroup of $G$, the associated symmetric space (see \cite{gunnells2014}) is given by
$$D := G/AK \cong \cH_2^{r} \times \cH_3^{s} \times \mathbb R_{>0}^{r+s-1},$$
where $\cH_2,\cH_3$ are the hyperbolic plane and space respectively.   

We are going to denote by $\widehat{\mathcal O}_K, \mathbb A_K^f$ the rings of finite ad\`{e}les of $\cO_{K}$ and $K$ respectively. Fix an ideal $\cN \subset \cO_K$ and define the compact open subgroup
$$U_0(\mathcal N) := \left\{ \gamma \in \GL_2(\widehat{\cO}_K) \Big| \gamma \equiv \left( \begin{array}{cc} * & * \\ 0 & * \end{array} \right) \text{ mod } \mathcal N  \right\}.$$

Consider the adelic locally symmetric space
$$Y_0(\mathcal N) := \GL_2(K) \backslash \left( \left( \GL_2(\mathbb A^{f}_{K}) / U_0(\mathcal N) \right) \times D \right).$$ 
This space is a disjoint union $$Y_0(\mathcal N) = \bigcup_{j=1}^{h_K} \Gamma_j \backslash D,$$
where $\Gamma_j$ are arithmetic subgroups of $\GL_2(K)$ with $\Gamma_1$ being the usual congruence subgroup $\Gamma_0(\mathcal N)$ of $\GL_2(\cO_K)$ and $h_K$ the class number of $K$.

The cohomology groups $H^{i}(Y_0(\cN), \overline{\F}_l)$ come equipped with commutative Hecke algebras $\mathbb T^i_{\F_{l}}$. The latter are generated by Hecke operators $T_{\mathfrak q}$ associated to prime ideals $\fq$ of $\cO_{K}$ coprime to $l \mathcal N$.

\begin{defn} A mod $l$ eigenform $\Psi$ of level $\cN$ and degree $i$ is a ring homomorphism
	$\Psi: \mathbb T^i_{\F_{l}} \to \overline{\mathbb F}_l$.
\end{defn}

It is known that the values of a mod $l$ eigenform $\Psi$ generate a finite field extension of $\F_{l}$. We will call two mod $l$ eigenforms of levels $\cN, \cM$ equivalent if their values agree on Hecke operators associated to prime ideals away from $l \cN \cM$. It was conjectured by Calegari and Emerton in \cite{calegari2011completed} that every mod $l$ eigenform should be equivalent to one with the same level and degree $r+s$. This conjecture is known to hold for $K$ imaginary quadratic due to low virtual cohomological dimension.

Similarly, the complex cohomology groups $H^{i}(Y_0(\mathcal N), \mathbb C)$ come equipped with Hecke operators $T_{\fq}$ for all prime ideals $\fq \subset \cO_K$ not dividing $\mathcal N$. These operators generate a commutative Hecke algebra $\mathbb T^i_{\C}(\cN)$.

\begin{defn}
	A complex eigenform $f$ of level $\mathcal N$ and degree $i$ is a complex valued character of $\mathbb T^i_{\C}(\cN)$, i.e. a ring homomorphism $\mathbb T^i_{\C}(\cN) \to \C$. 
\end{defn}

Given such complex eigenform $f$, it is known that its values generate a finite extension $\Q_f$ of $\Q$. Therefore, one can fix an ideal $\ml$ of $\Q_f$ above $l$ and obtain a mod $l$ eigenform, of the same degree and level, by just setting $\Psi_f(T_{\fq}) = f(T_{\fq}) \text{ mod } \ml$, for all primes $\fq$ coprime to $l \mathcal N$. It is said that a mod $l$ eigenform $\Psi$ lifts to a complex one if there is a complex eigenform $f$ with the same level and degree such that we can obtain $\Psi$ reducing $f$ as above, i.e. $\Psi=\Psi_f$.

A complex eigenform $f$ is called trivial if $f(T_{\fq})=\norm_{\Q_f/ \Q}(\fq)+1$ for all prime ideals $\fq$ of $\cO_K$ coprime to the level. Similarly, a mod $l$ eigenform $\Psi$ is called trivial if $\Psi(T_{\fq}) \equiv \norm_{\Q_f/ \Q}(\fq)+1 \mod{l}$ for all prime ideals $\fq$ away from $l$ and the level. Every trivial mod $l$ eigenform can be obtained by reducing an Eisenstein series associated to some cusp of $Y_0(\mathcal N)$, hence they lift to complex ones. 
%
The restriction to eigenforms for $\GL_2$ made in this paper allows us to avoid giving the definion of a ``cuspidal" eigenform. In the setting of $\GL_2$, non-triviality amounts to cuspidality.  

The existence of an eigenform (complex or mod $l$) is equivalent to the existence of a class in the corresponding cohomology group that is a simultaneous eigenvector for the Hecke operators such that its eigenvalues match the values of the eigenform. We  fix an embedding from $\overline{\Q} \hooklongrightarrow \mathbb C$.
Unlike the classical situation in which $K= \Q$, when $K$ is a general number field not all mod $l$ eigenforms lift to complex ones. To explain this, let us denote by $\Z_{(l)}$ the ring of rational numbers with denominators prime to $l$.  Consider the following short exact sequence given by multiplication-by-$l$

\begin{center}
	\begin{tikzcd}	
	0 \arrow[r] & \mathbb Z_{(l)} \arrow[r, "\times l"]  & \mathbb Z_{(l)} \arrow[r]  & \mathbb F_l \arrow[r] & 0
	\end{tikzcd}.
\end{center}
This gives rise to a long exact sequence on cohomology
\begin{center}
	\begin{tikzcd}
	\dots H^i(Y_0(\mathfrak N), \mathbb Z_{(l)}) \arrow[r,"\times l"] & H^i(Y_0(\mathfrak N), \mathbb Z_{(l)}) \arrow[r] \arrow[d, phantom, ""{coordinate, name=Z}] & H^i(Y_0(\mathfrak N), \mathbb F_l) \arrow[dll,
	"\delta",
	rounded corners,
	to path={ -- ([xshift=2ex]\tikztostart.east)
		|- (Z) [near end]\tikztonodes
		-| ([xshift=-2ex]\tikztotarget.west)
		-- (\tikztotarget)}] \\ H^{i+1}(Y_0(\mathfrak N), \mathbb Z_{(l)}) \arrow[r] & \dots
	\end{tikzcd}
\end{center}
from which we can extract the short exact sequence
\begin{center}
	\begin{tikzcd}[column sep = small]
	0 \arrow[r] & H^i(Y_0(\mathfrak N), \mathbb Z_{(l)}) \otimes \mathbb F_l \arrow[r] & H^i(Y_0(\mathfrak N), \mathbb F_l) \arrow[r, "\delta"] & H^{i+1}(Y_0(\mathfrak N), \mathbb Z_{(l)})[l] \arrow[r] & 0
	\end{tikzcd}.	
	
\end{center}

In the above, the presence of $l$-torsion in $H^{i+1}(Y_0(\mathfrak N), \mathbb Z_{(l)})$ is the obstruction to surjectivity for the map $H^i(Y_0(\mathfrak N), \mathbb Z_{(l)}) \otimes \F_l \to H^i(Y_0(\mathfrak N), \mathbb F_l)$. If there is only trivial such torsion, then any Hecke eigenvector $\overline c$ in $H^{i}(Y_0(\mathfrak N), \mathbb F_l)$ comes from such an eigenvector in $H^{i}(Y_0(\mathfrak N), \mathbb Z_{(l)}) \otimes \F_l$. Using a lifting lemma of Ash and Stevens \cite{ash1986}*{Proposition 1.2.2}, we deduce that there are
\begin{enumerate}
	\item a finite integral extension $R$ of $\Z_{(l)}$
	\item  a prime $\mathfrak l$ of $R$ above $l$ and
	\item a Hecke eigenvector $c$ in $H^{i}(Y_0(\mathfrak N), R)$
\end{enumerate}
such that the Hecke eigenvalues of $c$ reduced modulo $\mathfrak l$ are equal to the ones of $\overline{c}$. Using our fixed embedding $\overline{\Q} \hookrightarrow \C$ we can regard $c$ as a class in $H^{i}(Y_0(\cN), \C)$, which implies the existence of our sought after complex eigenform. 

\medskip

\noindent \textbf{Remark.} We observe that in the paragraph above, $\overline{c}$ is not necessarily the reduction of $c$. The result that we cite only states that \textit{a system of eigenvalues} occurring in $\F_l$ may, after finite base extension, be lifted to a system occurring in $\Z_{(l)}$. The interested reader should consult \cite{ash1986}*{Section 1.2} for a more illuminating discussion.  

\medskip

In the proof of our first theorem, we will be using a special case of Serre's modularity conjecture over number fields. Serre conjectured (see \cite{Serre}) that all absolutely irreducible, odd mod $l$ Galois representations of $G_{\Q}$ arise from a classical cuspidal eigenform $f$. In the same article, Serre gave a recipe for the level $N$ and the weight $k$ of the sought after eigenform. As previously mentioned, this conjecture was proved by Khare and Wintenberger \cite{Khare2009}. We now state a conjecture concerning mod $l$ representations of $G_K$, where $K$ is an arbitrary number field.

\begin{conj}[see \cite{ASiksek}*{Conjecture 3.1}] \label{conj:Serre}
	Let $\overline{\rho} : G_K \rightarrow \GL_2(\overline{\F}_l)$ be an odd, 
	irreducible, continuous representation with Serre conductor $\cN$ (prime-to-$l$
	part of its Artin conductor) such that
	${\det}(\overline{\rho})= \chi_l$, the mod $l$ cyclotomic character. 
	Assume that $l$ is unramified in $K$ and that $\overline{\rho}\vert_{G_{K_{\mathfrak l}}}$ arises from a finite-flat group scheme over
	$\cO_{K_{\mathfrak l}}$ for every prime $\mathfrak l \mid l $. Then there is a (weight $2$) mod $l$ eigenform $\theta$ over $K$ of level
	$\cN$ such that for all primes $\mP$ coprime to $l\cN$, we have 
	\[
	\Tr(\overline{\rho}
	({\Frob}_{\mP})) = \theta(T_{\mP}). 
	\]
\end{conj}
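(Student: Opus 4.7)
The statement is a version of Serre's modularity conjecture over an arbitrary number field $K$, not a theorem the paper proposes to establish; in full generality it is well beyond current technology, so the plan below is a schematic outline of how a proof might be structured, mirroring the Khare-Wintenberger strategy over $\Q$.

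The overall approach breaks into three interlocking stages. First, one needs the reverse reciprocity: to each non-trivial mod $l$ eigenform $\theta$ on $Y_0(\cN)$ one attaches a continuous Galois representation $\rho_\theta : G_K \to \GL_2(\overline{\F}_l)$ satisfying $\Tr(\rho_\theta(\Frob_{\mP})) = \theta(T_{\mP})$ for every $\mP \nmid l\cN$. Over totally real $K$ this follows from the work of Wiles, Taylor and Jarvis (plus the Hasse principle for quaternion algebras), while over general $K$ it is a recent deep achievement of Scholze via the $p$-adic geometry of the Hodge-Tate period map and a careful analysis of Borel-Serre boundaries. Second, starting from $\overline{\rho}$ as in the statement, one would establish \emph{potential} modularity: after a suitable solvable, CM (or totally real) base change $K'/K$, produce an automorphic representation whose associated mod $l$ representation matches $\overline{\rho}\vert_{G_{K'}}$. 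This is a Moret-Bailly-type argument combined with a modularity lifting theorem over $K'$. Third, one descends modularity back down to $K$ by cyclic base change (available by Arthur-Clozel over totally real intermediate fields), then runs a Ribet-style level-lowering and Khare-Wintenberger weight-lowering argument to arrive at a mod $l$ eigenform of the \emph{exact} prescribed level $\cN$, the finite-flatness hypothesis at primes above $l$ being what pins down weight $2$.

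The principal obstacle, and the reason this remains conjectural over fields with complex embeddings, is already flagged in the discussion preceding the statement: the cohomology groups $H^{i}(Y_0(\cN), \Z_{(l)})$ carry genuine $l$-torsion, so the classical Taylor-Wiles patching that underlies $R=T$ theorems breaks down and one is forced into the derived deformation-theoretic framework of Calegari-Geraghty. Making this machinery work uniformly, with the determinant condition $\det(\overline{\rho})=\chi_l$, the prescribed inertial type at primes above $l$, and without auxiliary hypotheses (small residual image, tameness, enormous image, bigness, etc.) is what currently puts the clean formulation above out of reach. Accordingly, in the foreseeable future one should expect this programme to yield the conjecture only under extra assumptions on $\overline{\rho}$, which is precisely why the present paper is forced to \emph{assume} Conjecture \ref{conj:Serre} in order to deduce Theorems \ref{GenNf} and \ref{treal}.
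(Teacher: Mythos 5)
You have correctly recognized that the statement labelled Conjecture~\ref{conj:Serre} is not a theorem of the paper but a hypothesis that the paper imports (from \c{S}eng\"un--Siksek) and \emph{assumes} throughout; the paper offers no proof of it, only a statement, a remark about normalizing the determinant to equal $\chi_l$ rather than a power of $\chi_l$, and the surrounding discussion of mod~$l$ eigenforms. There is therefore no argument in the paper against which to compare your sketch. Your outline of a Khare--Wintenberger-style programme (Galois representations attached to torsion classes via Scholze, potential modularity via Moret-Bailly plus modularity lifting, descent and level/weight optimization, with the Calegari--Geraghty derived framework needed to handle torsion in $H^{i}(Y_0(\cN),\Z_{(l)})$) is a fair and accurate summary of the current state of the art, and it dovetails with the paper's own remarks in Section~\ref{eigenfgl2} that $l$-torsion in the integral cohomology is precisely what obstructs lifting mod~$l$ eigenforms to complex ones. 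In short: your response is appropriate, and the only thing to flag is that the exercise of ``proving'' a statement the paper explicitly treats as a conjecture has no ground truth in the paper to check against.
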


\medskip

\noindent\textbf{Remark.} One of the hypothesis required in \cite{ASiksek}*{Conjecture 3.1} is that the prime to $l$ part of $\det(\rhobar)$ is trivial. This means that $\det(\rhobar)$ could a priori be a power of $\chi_l$, the mod $l$ cyclotomic character. The authors of \cite{ASiksek} informed us that after a discussion with Alain Krauss they have strong reasons to believe that the aforementioned conjecture should be more restrictive. This is the reason why the present Conjecture \ref{conj:Serre} is an augmented version of  \cite{ASiksek}*{Conjecture 3.1}, which requires that $\det(\rhobar)$ should be equal to $ \chi_{l}$ and not just a power of it.

\medskip

\noindent \textbf{Remark.} For every real embedding $\sigma:K \hookrightarrow \R$ and every extension $\tau : \overline{K} \to \mathbb C$ of $\sigma$, we obtain a complex conjugation $\tau^{-1} \circ c \circ \tau \in G_K$, where $c$ is the non-trivial element of $\Gal(\C / \R)$. A representation $\overline\rho:G_K \to \GL_2(\overline{\F}_l)$ is called \textit{odd} if the determinant of every complex conjugation is $-1$. In the absence of such complex conjugations (i.e. when the field $K$ is totally complex) we regard every representation as odd.	

\medskip

Although it is conjecturally easy to predict the level  $\mathcal N$  of such an eigenform, doing the same thing for the weight can be very difficult. A quite involved general weight recipe for $\GL_2$  over number fields was given by Gee et al. \cite{GeeHerzSav}. We will just mention that this recipe depends on the restriction  $\overline{\rho}|_{I_\ml}$  to the inertia subgroups for the primes   $\ml \subset \mathcal O_K$  above $l$. We only considered very special representations   $\overline{\rho}$  (that are finite flat at $\ml \mid l$ ),  for which Serre’s original weight recipe applies and predicts the trivial weight \cite{Serre}. This is why we end up with classes in   $H^{i}(Y_0(\mathcal N), \overline{\F}_l)$,  the trivial weight meaning that we get  $\overline{\F}_l$  as coefficient module.

For every complex cuspidal newform $\ff$ with rational integer Hecke eigenvalues, Langland's philosophy predicts there should be a motif attached to $\ff$. This motif is not always an elliptic curve. Some such newforms correspond to a special type of abelian surface as one can see in \cite{cremonatwist}*{Theorem 5} or \cite{AFreitasSiksek}*{Section 4}.

A simple abelian surface $A$ over $K$ whose algebra $D:=\End_K(A) \otimes_{\Z} \Q$ of $K$-endomorphisms is an indefinite division quaternion algebra over $\Q$ is called a \textit{fake elliptic curve}. It is known that if $A/K$ is a fake elliptic curve, then $K$ must be totally complex.

Let $A/K$ be a fake elliptic curve and let $l$ be a prime of good reduction for $A$. From the $l$-adic Tate module $T_l(A)$ we get a representation $\sigma_{A,l}: G_K \to \GL_4(\Z_l)$. Let $\mathcal O$ be $\End_K(A)$, viewed as an order in $D$. It is a standard fact in the theory of quaternion algebras over number fields $K$, as discussed for example in \cite{sengun3manifolds}, that all but finitely many primes $l$ split the quaternion algebra $D$. It is described in Ohta's paper \cite{ohta} that if one denotes $\cO_l := \cO \otimes_{\Z} \Z_l$, then $T_l(A)$ is isomorphic to $\cO_l$ as left $\cO$ module. The action of $\cO$ on $T_l(A)$ is via endomorphisms. This is the source of a two dimensional $l$-adic Galois representation
$$\rho_{A,l}: G_K \to \Aut_{\cO}(T_l(A)) = \cO_l^{\times} \cong \GL_2(\Z_l).$$
Moreover, the author of \textit{loc.\ cit.} proves that $\sigma_{A,l}  = \rho_{A,l} \oplus \rho_{A,l}$. 

We are now ready to state the second conjecture used in this paper.

\begin{conj}[\cite{ASiksek}*{Conjecture 4.1}]\label{conj:ES}
	Let $\ff$ be a (weight $2$) complex eigenform over $K$ of level $\mathcal{N}$ that is non-trivial, new and has rational integer Hecke eigenvalues. If $K$ has some real place, then there exists an elliptic curve $E_\ff/K$, of conductor 
	$\mathfrak{N}$, such that
	\begin{align}\label{eq: property defining E_f}
	\# E_\ff(\cO_K/\fq) = 1 + {\bf N}\fq - \ff(T_\fq) 
	\quad \text{for all $\fq \nmid \cN$}.
	\end{align}
	If $K$ is totally complex, then there exists either an elliptic curve $E_\ff$ of conductor $\mathcal N$ satisfying \eqref{eq: property defining E_f} or a fake elliptic curve $A_\ff/K$, of conductor 
	$\cN^2$, such that
	\begin{align}\label{eq: property defining A_f}
	\# A_\ff(\cO_K/\fq) = \left (1 + {\bf N}\fq - \ff(T_\fq) \right )^2 
	\quad \text{for all $\fq \nmid \cN$}.
	\end{align}
\end{conj}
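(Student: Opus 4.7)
The plan is to mimic the classical Eichler-Shimura construction, which for $K=\mathbb Q$ attaches to a weight $2$ rational newform an isogeny factor of the Jacobian of the relevant modular curve. For a general $K$, the locally symmetric space $Y_0(\mathcal N)$ carries no algebraic structure, so the correspondence must be routed through compatible systems of $l$-adic Galois representations rather than through the geometry of a modular curve.

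First, I would attach to $\ff$ a compatible system $\{\rho_{\ff,l}\}_{l}$ of Galois representations $\rho_{\ff,l}: G_K \to \GL_2(\overline{\mathbb Q}_l)$ characterised by $\Tr\rho_{\ff,l}(\Frob_{\fq}) = \ff(T_{\fq})$ and $\det\rho_{\ff,l}=\chi_l$ for all $\fq \nmid l\mathcal N$. For $K$ totally real this is essentially known via the Jacquet-Langlands correspondence applied to the $l$-adic cohomology of suitable Shimura curves (Carayol, Taylor, Blasius-Rogawski, Jarvis). For general $K$, one invokes Scholze's construction (and Harris-Lan-Taylor-Thorne in regular weight), which $p$-adically interpolates Galois representations coming from auxiliary unitary Shimura varieties.

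Second, I would verify that the system $\{\rho_{\ff,l}\}$ is geometric in the sense of Fontaine-Mazur: unramified outside $l\mathcal N$ and de Rham of Hodge-Tate weights $(0,1)$ at primes above $l$, with the weight $2$ hypothesis pinning these weights down. Granting Fontaine-Mazur, the system arises from a motive, and by Faltings' isogeny theorem this motive comes from an abelian variety $B/K$ up to isogeny. Since every $\ff(T_{\fq}) \in \mathbb Z$, the algebra $\End_{K}^{0}(B)$ is forced to be either $\mathbb Q$ (in which case $B$ is an elliptic curve $E_{\ff}$) or an indefinite quaternion division algebra over $\mathbb Q$ (in which case $B$ is a fake elliptic curve $A_{\ff}$). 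The point-count identities \eqref{eq: property defining E_f} and \eqref{eq: property defining A_f} then drop out of comparing traces of $\Frob_{\fq}$ on the relevant $l$-adic Tate modules. When $K$ admits a real embedding, the quaternionic option is ruled out by the standard fact, recalled earlier in the paper, that a fake elliptic curve forces $K$ to be totally complex.

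The main obstacle is the very first step for number fields with complex embeddings. Scholze's Galois representations exist but carry only partial local-global compatibility, and for non-regular weight---including weight $2$ over non-totally-real $K$---they are constructed only up to a nilpotent ideal in the Hecke algebra. Upgrading them to a genuinely geometric compatible system, proving that the local inertial types at primes in $\mathcal N$ recover the correct conductor $\mathcal N$ (resp.\ $\mathcal N^{2}$ in the fake case), and then descending a $\GL_2$-type abelian variety to an honest elliptic or fake elliptic curve with the predicted invariants together make up the substance of the Langlands-Eichler-Shimura prediction and are what keep \cite{ASiksek}*{Conjecture 4.1} in the realm of conjecture rather than of theorem.
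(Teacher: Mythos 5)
The statement you were asked to prove is not a theorem of the paper but a conjecture: it is Conjecture~\ref{conj:ES}, cited verbatim from \cite{ASiksek}*{Conjecture 4.1}, and the paper merely \emph{assumes} it as a hypothesis of Theorems~\ref{GenNf} and~\ref{treal}. There is therefore no proof in the paper for your sketch to be compared against; the only thing the paper says about its status is that a partial case (due to Blasius, building on Hida's work on Shimura curves) is known when $K$ is totally real of odd degree. Your write-up correctly recognises this: the final paragraph explicitly flags that upgrading Scholze's torsion Galois representations to an honest compatible system in weight $2$ over a non-totally-real field, establishing the full local-global compatibility needed to recover the conductor $\cN$ (or $\cN^2$), and descending the $\GL_2$-type motive to an actual elliptic or fake elliptic curve are precisely what keep the statement conjectural. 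So you have not produced a proof, and none should be expected.

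As a sketch of the expected route your account is reasonable, but a couple of the intermediate reductions are themselves conjectural beyond what you flag. The step ``granting Fontaine--Mazur, the system arises from a motive, and by Faltings' isogeny theorem this motive comes from an abelian variety'' compresses two separate open inputs: Fontaine--Mazur gives a motive only in a weak sense, and passing from a rank-$2$ motive with $\Z$-coefficients and determinant the Tate motive to a $\GL_2$-type abelian variety needs the Tate conjecture (or an artisanal construction), not Faltings' isogeny theorem, which compares abelian varieties that are already given. Likewise ``$\End_K^0(B)$ is forced to be either $\Q$ or an indefinite quaternion division algebra'' uses that $B$ has dimension $1$ or $2$, which you have not argued; in principle one must rule out larger CM or RM factors, and for fake elliptic curves one must also justify why the conductor should be $\cN^2$ rather than $\cN$. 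These are not errors so much as additional layers of conjecture that would need to be named if one wanted an honest account of what is being assumed, but none of them changes the upshot: the statement is, and is treated by the paper as, an open conjecture.
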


For a partial result towards Conjecture \ref{conj:ES} we refer to \cite{AFreitasSiksek}*{Theorem 8}, which was derived by Blasius from the work of Hida \cite{hida81}. In particular, the aforementioned conjecture holds when $K$ is totally real such that $[K:\Q]$ is odd.

A standard fact about fake elliptic curves is the following.

\begin{thm}[\cite{Jordan1986}*{Section 3}] Let $A/K$ be a fake elliptic curve. Then $A$ has potentially good reduction everywhere. More precisely, let $\fq$ be a prime of $K$ and consider $A/K_{\fq}$. There is a totally ramified extension $K'/K_{\fq}$ of degree dividing $24$ such that $A/K'$ has good reduction.
\end{thm}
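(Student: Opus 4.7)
The plan is to establish the theorem in two parts: potentially good reduction everywhere first, then the bound on the degree of the required ramified extension.

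\emph{Potentially good reduction.} Fix a prime $\fq$ of $K$ and a rational prime $l$ coprime to the residue characteristic and splitting $D$, so that (as recalled in the excerpt) $\cO_l \cong M_2(\Z_l)$ and $T_l(A)$ is free of rank one over $\cO_l$. By Grothendieck's semi-stable reduction theorem there is a finite Galois extension $L/K_\fq$ over which $A_L$ acquires semi-stable reduction, so that the identity component of the special fibre of the Néron model fits in a canonical short exact sequence $0 \to T_s \to \tilde A^0 \to B_s \to 0$ with $T_s$ a torus and $B_s$ an abelian variety. Since $\cO$ acts on $A$ over $K$, it acts on the Néron model and preserves this canonical filtration; passing to Tate modules yields an $\cO \otimes \Q_l$-stable subspace $V_l(T_s) \subset V_l(A)$. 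But $V_l(A)$ is a free rank-one module over $M_2(\Q_l)$ and so has no proper non-zero $M_2(\Q_l)$-submodule, forcing $V_l(T_s)$ to equal either $0$ or $V_l(A)$. The latter case would make $A_L$ isogenous to a torus, which is impossible for an abelian variety, so $T_s = 0$ and $A_L$ has good reduction.

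\emph{Degree bound.} To bound $[K':K_\fq]$ it suffices, after base change to $K_\fq^{\mathrm{nr}}$, to bound $[K' \cdot K_\fq^{\mathrm{nr}} : K_\fq^{\mathrm{nr}}]$, the degree of the minimal totally ramified extension that trivialises the inertia action on the Tate module. By N\'eron--Ogg--Shafarevich, $\Gal(K' \cdot K_\fq^{\mathrm{nr}}/K_\fq^{\mathrm{nr}})$ embeds into $\GL(T_l(A))$, and by functoriality of the N\'eron model together with the comparison $\mathcal{A}_s[l^n] \cong A[l^n]$ (valid since $l$ is coprime to the residue characteristic) it embeds further into the $\cO$-linear automorphism group $\Aut_\cO(\mathcal{A}_s)$ of the special fibre of the good N\'eron model over $\cO_{K'}$. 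Analysing separately the ordinary and the supersingular possibilities for $\mathcal{A}_s$ and invoking the classification of finite subgroups of unit groups of maximal orders in the associated (definite or indefinite) quaternion algebras then produces the divisibility by $24$.

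\emph{Main obstacle.} Step one is a fairly clean consequence of Grothendieck's theorem combined with the fact that $D$ is a division algebra, and would carry through for any abelian variety whose endomorphism algebra contains a simple factor of sufficiently large dimension over $\Q$. The delicate ingredient is the specific constant $24$ appearing in step two; it is not formal but rests on the classification of finite subgroups of unit groups of maximal orders in quaternion algebras (essentially the binary polyhedral groups) and on the explicit description of $\cO$-linear automorphism groups of QM abelian surfaces in positive characteristic. This is precisely the content of Jordan's original argument and is substantially more intricate than the first step.
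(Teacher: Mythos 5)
The paper itself offers no proof of this theorem: it is quoted verbatim as a known result of Jordan (\cite{Jordan1986}, Section 3), so there is no ``paper's own proof'' to compare against. The review below therefore assesses your sketch on its own merits.

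Your first step contains a genuine gap. You choose $l$ \emph{splitting} $D$, so that $\cO_l \cong M_2(\Z_l)$ and $V_l(A)$ is free of rank one over $M_2(\Q_l)$, and then assert that such a module ``has no proper non-zero $M_2(\Q_l)$-submodule.'' This is false: $M_2(\Q_l)$, viewed as a left module over itself, decomposes as two copies of the standard two-dimensional simple module, so it has many proper non-zero left submodules (any non-trivial left ideal, for instance the matrices whose second column vanishes). You have conflated simplicity of $M_2(\Q_l)$ as a \emph{ring} with simplicity as a \emph{module}. As written, the argument cannot rule out a two-dimensional $\cO$-stable toric part. The fix is to run the argument at a prime where $D$ does \emph{not} split (such a prime $\neq$ the residue characteristic exists, since $D$ is a rational division algebra ramified at a non-empty even set of finite places), so that $D\otimes\Q_l$ is a four-dimensional division algebra over $\Q_l$ and any non-zero $D\otimes\Q_l$-submodule of $V_l(A)$ has $\Q_l$-dimension a multiple of $4$. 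Cleaner still, and independent of any choice of $l$: the character group $X(T_s)$ of the toric part is a free $\Z$-module of rank $\dim T_s \leq 2$ carrying a functorial $\cO$-action, so $X(T_s)\otimes\Q$ is a $D$-module of $\Q$-dimension at most $2$; since $D$ is a division algebra of dimension $4$ over $\Q$, any non-zero $D$-module has $\Q$-dimension at least $4$, forcing $X(T_s)=0$.

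Your second step is, as you yourself note, not a proof but a pointer to Jordan's classification of $\cO$-linear automorphisms of QM abelian surfaces over finite fields and finite subgroups of quaternionic unit groups. That is an accurate description of where the constant $24$ comes from, but as a proof attempt it defers the entire content of the bound. Since the paper only cites this fact, deferring is acceptable in spirit, but you should be aware that it is the heart of the theorem, not a routine verification.
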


The above theorem trivially implies the following proposition, which we just record here for further reference.
\begin{prop} \label{fakeinertia} If $\overline{\rho}_{A,l}: G_K \to \mathbb \GL_2(\F_l)$ is the mod $l$ reduction of the $l$-adic representation defined above, then for every prime ideal $\fq \subseteq \cO_K$ we have $$\# \overline{\rho}_{A,l}(I_{\fq}) \leq 24,$$
	where $I_{\fq} \unlhd G_K$ is the inertia subgroup at $\fq$. 
\end{prop}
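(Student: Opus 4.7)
The plan is to combine Jordan's theorem just stated with the Ohta decomposition $\sigma_{A,l} = \rho_{A,l} \oplus \rho_{A,l}$ and the N\'eron-Ogg-Shafarevich criterion. Fix a prime $\fq$ of $\cO_K$ (for the intended application, coprime to the rational prime $l$). First, pass to the decomposition group to identify $I_\fq \subseteq G_K$ with the inertia subgroup of $G_{K_\fq}$, so that the question becomes a local one at $\fq$.

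Next, invoke the theorem to produce a totally ramified extension $K'/K_\fq$ of degree $e \mid 24$ such that $A/K'$ has good reduction. Writing $I_{K'} = I_\fq \cap G_{K'}$ for the inertia subgroup of $K'$ at the unique prime above $\fq$, total ramification of $K'/K_\fq$ yields the index identity $[I_\fq : I_{K'}] = e$.

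Since $A/K'$ has good reduction and the residue characteristic of $\fq$ is different from $l$, the N\'eron-Ogg-Shafarevich criterion applied to the Tate module gives that $I_{K'}$ acts trivially on $T_l(A)$, i.e.\ $\sigma_{A,l}(I_{K'}) = 1$. The Ohta decomposition $\sigma_{A,l} = \rho_{A,l} \oplus \rho_{A,l}$ then forces $\rho_{A,l}(I_{K'}) = 1$, and reducing modulo $l$ gives $\overline{\rho}_{A,l}(I_{K'}) = 1$. Combined with the index identity, this yields
$$
\#\overline{\rho}_{A,l}(I_\fq) \;\leq\; [I_\fq : I_{K'}] \;=\; e \;\leq\; 24,
$$
which is the required bound.

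The only obstacle is conceptual rather than technical: one must recognize that Jordan's potential-good-reduction statement can be upgraded to a bound on the image of inertia, via the two-step mechanism N\'eron-Ogg-Shafarevich $+$ Ohta's splitting. The argument breaks down for primes $\fq$ of residue characteristic equal to $l$, where N\'eron-Ogg-Shafarevich no longer yields trivial inertia. In the intended use of this proposition (the proof of Theorem~\ref{GenNf}), however, $\fq$ is fixed as the prime dividing $\Delta_F$ exactly once and $l$ is eventually chosen larger than the residue characteristic of $\fq$, so this case does not arise.
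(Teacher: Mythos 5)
Your proof is correct and fills in exactly the steps the paper leaves implicit when it writes that Jordan's theorem ``trivially implies'' the proposition: pass to the decomposition group, use the totally ramified degree-$e$ extension with $e \mid 24$, apply N\'eron--Ogg--Shafarevich to kill the action of the smaller inertia group, and count the index. Your observation that the N\'eron--Ogg--Shafarevich step genuinely requires $\fq \nmid l$ is a real subtlety that the paper's blanket phrasing ``for every prime ideal $\fq$'' glosses over; as you note, it is harmless in the one place the proposition is used, since there $\fq$ is a fixed prime dividing $\Delta_F$ and $l$ is eventually taken larger than $24$ and larger than $\norm(\fq)$.
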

This is going to be crucial in Section \ref{seccomplex} when we are showing that mod $l$ Galois representations of elliptic curves attached to a solution of \eqref{superelliptic} are not \textit{compatible} with representations $\overline{\rho}_{A,l}$ for any fake elliptic curve $A$.

\section{Properties of the Frey curve}
The proofs of Theorem \ref{GenNf} and \ref{treal} use a construction of Bennett and Dahmen \cite{bendah}. For every Klein form (see \cite{bendah}*{Section 2} for the precise definition of these special binary forms) $F(x,y)\in \cO_K[X,Y]$, the authors of \textit{loc.\ cit} constructed a family of  Frey-Hellegouarch curves $E_{x,y}$. Important properties of this family of curves are nicely controlled by the Klein form $F(x,y)$. It turns out that all non-singular binary cubics are Klein forms (of index $2$) and from now on we restrict ourselves to these particular forms. One could write similar, more general, statements for our main two theorems when $F$ is allowed to be a general Klein form. Although these could be proved following the strategy presented here, we only treat binary cubics. In this case, the computations are shorter and easier to follow, therefore facilitating a better exposition of the main ideas in our proofs. The author is considering dedicating a chapter from his PhD thesis to analogous work on all the types of Klein forms.

Let $F(x,y) \in K[X,Y]$ be an irreducible binary cubic of discriminant $\Delta_F$. Its Hessian is the quadratic form
$$H(x,y)= \frac{1}{4} \left| \begin{array}{cc} F_{xx} & F_{xy} \\ F_{xy} & F_{yy}  \end{array} \right| $$
and the \textit{Jacobian determinant} of $F$ and $H$ is the cubic form
$$G(x,y)= \left| \begin{array}{cc} F_{x} & F_y \\ H_x & H_y \end{array} \right|. $$
Connecting them there is the following identity, vital for the construction of the family of Frey-Hellegouarch curves in \cite{bendah},
\begin{equation} \label{syzygy} 4H(x,y)^3 + G(x,y)^2 = -27 \cdot \Delta_F \cdot F(x,y)^2. \end{equation}
The identity above holds for every binary cubic $F\in K[X,Y]$ whose discriminant $\Delta_F$ is non-zero, not just the irreducible ones. There are similar identities, called \textit{syzygies}, for all types of Klein forms. Non-existence of such syzygies is the obstruction in extending this construction of such families of elliptic curves associated to all irreducible binary forms.

For future use, we record the following proposition.

\begin{prop}[\cite{bendah}*{Prop. 2.1}] \label{resultant} The resultant of a binary form $F$ of degree $k$ with its Hessian $H$ satisfies
	$$\Res(H(x,y),F(x,y))= (-1)^k \Delta_F^2.$$
\end{prop}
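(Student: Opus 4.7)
The plan is to work over an algebraic closure and use the multiplicative form of the resultant. Factor the binary form as $F(x,y) = a_0 \prod_{i=1}^k (x - \alpha_i y)$ with $a_0 \in \overline{K}^{\times}$, and recall the identity
\[
\Res(H, F) = a_0^{\deg H} \prod_{i=1}^k H(\alpha_i, 1).
\]
The whole problem therefore reduces to evaluating $H$ at the projective roots of $F$ and comparing the result to the standard product expression for the discriminant, namely $\Delta_F = a_0^{2k-2} \prod_{i<j}(\alpha_i - \alpha_j)^2$ (together with $F_x(\alpha_i,1) = a_0 \prod_{j \neq i}(\alpha_i - \alpha_j)$).

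The key computational step is to show that, whenever $F(\alpha,1)=0$,
\[
H(\alpha,1) = -\tfrac{(k-1)^2}{4}\, F_x(\alpha,1)^2.
\]
I would obtain this by repeatedly applying Euler's relation for homogeneous functions. Applied to $F$ itself, it gives $\alpha F_x(\alpha,1) + F_y(\alpha,1) = kF(\alpha,1)=0$, so $F_y(\alpha,1) = -\alpha F_x(\alpha,1)$. Applied instead to the degree $(k-1)$ forms $F_x$ and $F_y$ at the same point, it yields
\[
\alpha F_{xx} + F_{xy} = (k-1)F_x, \qquad \alpha F_{xy} + F_{yy} = (k-1)F_y = -(k-1)\alpha F_x.
\]
Solving this linear system for $F_{xx}$ and $F_{yy}$ in terms of $F_x$ and $F_{xy}$, substituting into $4H = F_{xx}F_{yy} - F_{xy}^2$, and simplifying makes the $F_{xy}$ contribution cancel, leaving the claimed expression for $H(\alpha,1)$.

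Putting everything together, one obtains
\[
\prod_{i=1}^k H(\alpha_i,1) = (-1)^k \frac{(k-1)^{2k}}{4^k} \prod_{i=1}^k F_x(\alpha_i,1)^2 = (-1)^k \frac{(k-1)^{2k}}{4^k}\, a_0^{4-2k}\, \Delta_F^2,
\]
where the last equality uses the product formula for $\prod_i \prod_{j \neq i}(\alpha_i - \alpha_j)^2 = \prod_{i<j}(\alpha_i-\alpha_j)^4$. Multiplying by $a_0^{\deg H} = a_0^{2k-4}$ cancels the $a_0$-powers, and for the cubic case $k=3$ one has $(k-1)^{2k}/4^k = 64/64 = 1$, giving precisely $(-1)^k \Delta_F^2$. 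The main obstacle is not the idea, but the bookkeeping: one must be careful with the convention used for the Hessian (the factor $\frac{1}{4}$ in the definition) and with sign/permutation conventions in the resultant product formula, since a different choice of these constants would distort the final coefficient and break the clean identity $\Res(H,F) = (-1)^k \Delta_F^2$.
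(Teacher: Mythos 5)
Your argument is correct for the cubic case, which is the only case this paper actually uses. The paper itself does not prove the proposition; it cites it directly from \cite{bendah}*{Prop.\ 2.1}, so there is no in-paper argument to compare against, but your direct computation---evaluating $H$ at the roots of $F$ via Euler's relation and then assembling the product formulas for the resultant and the discriminant---is a clean, self-contained verification. Your closing remark about normalization is also on point: with the Hessian normalized as $H=\tfrac14\bigl(F_{xx}F_{yy}-F_{xy}^2\bigr)$, as in this paper's definition for cubics, your calculation gives $\Res(H,F)=(-1)^k\tfrac{(k-1)^{2k}}{4^k}\Delta_F^2$, which collapses to the stated $(-1)^k\Delta_F^2$ exactly when $k=3$; for the higher-degree Klein forms of Bennett and Dahmen one must instead normalize the Hessian covariant by $\tfrac{1}{(k-1)^2}$, and your computation makes this dependence transparent rather than obscuring it.
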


Suppose now that $F$ has integral coefficients, more precisely $F(x,y)=\alpha_0x^3+\alpha_1x^2y + \alpha_2 xy^2 + \alpha_3 y^3$ with $\alpha_i \in \cO_K$, for all $i \in \{0,1,2,3,4 \}$.
Set $T(x,y) = \alpha_1 x - \alpha_2 y $.
The authors of \cite{bendah} constructed the following Frey-Hellegouarch curve

\begin{equation} \label{freyxy} E_{x,y}: Y^2 = X^3+TX^2+ \frac{T^2+H}{3}X + \frac{T^3+3TH+G}{27}. \end{equation}
The dependence of $x,y$ is implicit, as $T= T(x,y)$ and $H = H(x,y)$.

\medskip

It is easy to show that $(T^2+H)/3$ and $(T^3+3TH+G)/27 \in \cO_K[X,Y]$. Making use of the formula for the discriminant of an elliptic curve and of the syzygy \eqref{syzygy},  the fundamental quantities associated to \eqref{freyxy} can be computed as
\begin{equation} \label{discrxy}
\Delta(x,y)= 2^4 \Delta_F F(x,y)^2, \hspace{0.5cm}
c_4(x,y) = - 2^4 H(x,y), \hspace{0.5cm} c_6(x,y)=-2^5 G(x,y)
\end{equation}
and
\begin{equation} \label{jxy}
j(x,y) = \frac{-2^8 H(x,y)^3}{\Delta_F F(x,y)^2}
\end{equation}

\begin{prop} \label{semaway} Let $E_{x,y}$ be a family of Frey curves associated to a binary cubic form $F \in \mathcal O_K[x,y]$, as in \eqref{freyxy}. Let $\mP \not \in S_F$ be a prime ideal in $\cO_K$ and $x_1,y_1 \in \mathcal O_K$ such that $\mP \nmid \gcd(x_1,y_1)$. Then $E_{x_1,y_1}$ is semistable at $\mP$.
\end{prop}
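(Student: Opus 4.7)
The plan is to split into two cases based on whether $\mP$ divides $F(x_1,y_1)$ and then read off the reduction type of $E_{x_1,y_1}$ at $\mP$ from the invariants in \eqref{discrxy}. Throughout, $\mP \notin S_F$ ensures that both $2$ and $\Delta_F$ are $\mP$-units. If $\mP \nmid F(x_1,y_1)$, then $\Delta(x_1,y_1) = 2^4 \Delta_F F(x_1,y_1)^2$ is itself a $\mP$-unit, so \eqref{freyxy} is already a good-reduction model at $\mP$ and $E_{x_1,y_1}$ is, in particular, semistable at $\mP$.

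The interesting case is $\mP \mid F(x_1,y_1)$. Here the goal is to show $v_\mP(c_4(x_1,y_1)) = 0$. Together with $v_\mP(\Delta(x_1,y_1)) = 2 v_\mP(F(x_1,y_1)) > 0$, this is precisely the condition for the reduction of \eqref{freyxy} modulo $\mP$ to be a nodal cubic, which in turn means that $E_{x_1,y_1}$ has multiplicative (hence semistable) reduction at $\mP$. Since $c_4(x_1,y_1) = -2^4 H(x_1,y_1)$ and $\mP \nmid 2$, the claim reduces to $\mP \nmid H(x_1,y_1)$. This is where Proposition \ref{resultant} enters: it gives $\Res(H,F) = -\Delta_F^2$, which is a $\mP$-unit. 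Combined with the classical B\'ezout-type identities for binary forms, there exist polynomials $A_i, B_i \in \cO_K[x,y]$ and a positive integer $N$ such that
\begin{equation*}
A_1(x,y)\, F(x,y) + B_1(x,y)\, H(x,y) = x^N \Res(H,F),
\end{equation*}
together with an analogous identity having $y^N$ on the right. If $\mP$ were to divide both $F(x_1,y_1)$ and $H(x_1,y_1)$, substituting $(x_1,y_1)$ into these two identities would force $\mP \mid x_1$ and $\mP \mid y_1$, contradicting $\mP \nmid \gcd(x_1,y_1)$.

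The main technical subtlety is ensuring that the B\'ezout identities above hold in $\cO_K[x,y]$ with no extraneous denominators coming from the Sylvester-matrix construction of the resultant; this is classical but deserves an explicit reference. A secondary point is the implicit minimality check: once $v_\mP(c_4) = 0$ is established, the transformation laws $c_4 = u^4 c_4^{\min}$, $\Delta = u^{12} \Delta^{\min}$ with $u, c_4^{\min}, \Delta^{\min} \in \cO_{K,\mP}$ force $v_\mP(u) = 0$ and $v_\mP(c_4^{\min}) = 0$, so the reduction type is read directly off \eqref{discrxy}. Everything else is substitution into \eqref{discrxy} and the standard reading of the Weierstrass model at $\mP$.
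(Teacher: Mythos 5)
Your proof is correct and follows essentially the same route as the paper, which also invokes the criterion that $\mP\nmid c_4$ or $\mP\nmid\Delta$ forces semistability and then appeals to the formulas \eqref{discrxy} together with Proposition \ref{resultant}. The only difference is one of explicitness: the paper leaves the implication ``$\mP\mid F(x_1,y_1)$ and $\mP\mid H(x_1,y_1)$, together with $\mP\nmid\gcd(x_1,y_1)$, contradicts $\mP\nmid\Res(H,F)$'' as an immediate consequence of the resultant identity, whereas you spell out the underlying B\'ezout-type identities $A\,F+B\,H=\Res(H,F)\,x^N$ (and with $y^N$), which is exactly the classical mechanism making that implication work.
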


\begin{proof} It is known that if a Weierstrass model of $E_{x_1,y_1}$ over $\cO_K$ has $\mP \nmid c_4(x,y)$ or $\mP \nmid \Delta(x_1,y_1)$ then $E_{x_1,y_1}$ is semistable at $\mP$. Recall that the set $S_F$ contains the prime ideals dividing $2 \Delta_F$.  The proposition follows from the formulas for $\Delta(x_1,y_1)$ and $c_4(x_1,y_1)$ and the resultant identity in Proposition \ref{resultant}.
\end{proof}

\begin{prop} \label{notchangeN}
	Let $E_{x,y}$ be a family of elliptic curves defined as above. Suppose that for some $x_1,y_1 \in K$ the conductor of $E_{x_1,y_1}$ is supported only on $S_F$. If the class number of $K$ is greater than one, there exists $\xi \in K^{\times}$ such that $\xi \cdot x_1, \xi \cdot y_1$ are integral, $\gcd(\xi \cdot x_1, \xi \cdot y_1) \in \cH_K$ and the conductor of $E_{ \xi \cdot x_1, \xi \cdot y_1}$ is supported only on $S_F$.	If the class number of $K$ is one,  there exists $\xi \in K^{\times}$ such that $\xi \cdot x_1, \xi \cdot y_1$ are integral, $\gcd(\xi \cdot x_1, \xi \cdot y_1)=1$ and the same conclusion about the conductor holds. 
\end{prop}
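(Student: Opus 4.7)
Let $\ga := (x_1, y_1)$ denote the fractional ideal of $K$ generated by $x_1$ and $y_1$. The plan is to first choose $\xi$ so that the ideal-theoretic conclusion is built into the construction, and then to verify the conductor assertion by recognising $E_{\xi x_1, \xi y_1}$ as a quadratic twist of $E_{x_1, y_1}$. The construction of $\xi$ is immediate: if $h_K = 1$, write $\ga = (\beta)$ and take $\xi = \beta^{-1}$, giving $(\xi x_1, \xi y_1) = \cO_K$; if $h_K > 1$, let $\cm_i \in \cH_K$ be the unique representative of the ideal class of $\ga$, write $\ga \cm_i^{-1} = (\beta)$, and take $\xi = \beta^{-1}$, so that $(\xi x_1, \xi y_1) = \cm_i$. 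In either case $\xi x_1, \xi y_1 \in \cO_K$ and the first two conclusions hold.

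For the conductor, the formulas \eqref{discrxy} imply that $c_4(\lambda x, \lambda y) = \lambda^2 c_4(x,y)$ and $c_6(\lambda x, \lambda y) = \lambda^3 c_6(x,y)$ for every $\lambda \in K^\times$, so $E_{\lambda x, \lambda y}$ is the quadratic twist of $E_{x,y}$ by $\lambda$. Taking $\lambda = \xi$ gives $E_{\xi x_1, \xi y_1} = E_{x_1,y_1}^{\xi}$. For a prime $\mP \notin S_F$ (in particular $\mP \nmid 2$), the twist $E_{x_1,y_1}^{\xi}$ has good reduction at $\mP$ exactly when the quadratic extension $K(\sqrt{\xi})/K$ is unramified at $\mP$, which amounts to $v_\mP(\xi)$ being even. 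Since $\cm_i \in \cH_K \subset S_F$, the formula $(\xi) = \cm_i \ga^{-1}$ gives $v_\mP(\xi) = -v_\mP(\ga)$ for $\mP \notin S_F$, so the proposition reduces to the claim that $v_\mP(\ga) = \min\{v_\mP(x_1), v_\mP(y_1)\}$ is even for every $\mP \notin S_F$.

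The main obstacle is this key claim. To prove it, fix $\mP \notin S_F$, set $n = v_\mP(\ga)$, and use weak approximation to choose $\mu \in \ga^{-1}$ with $v_\mP(\mu) = -n$ (such $\mu$ exists since $\ga^{-1}/\mP\ga^{-1}$ is a one-dimensional $\cO_K/\mP$-vector space). By construction $\mu x_1, \mu y_1 \in \cO_K$ and $\mP \nmid \gcd(\mu x_1, \mu y_1)$, so Proposition \ref{semaway} guarantees that $E_{\mu x_1, \mu y_1}$ is semistable at $\mP$, i.e.\ has good or multiplicative reduction. On the other hand, $E_{\mu x_1, \mu y_1} = E_{x_1, y_1}^{\mu}$ by the observation above, and by hypothesis $E_{x_1, y_1}$ has good reduction at $\mP$; if $n$ were odd, then $v_\mP(\mu)$ would be odd, so $K(\sqrt{\mu})/K$ would be a ramified quadratic extension at $\mP$, and the twist would acquire (potentially good) additive reduction, contradicting semistability. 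Hence $n$ is even, completing the proof of the claim and of the proposition.
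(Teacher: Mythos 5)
Your proof is correct, and it takes a genuinely different route from the paper's. The paper exploits the fact that $j(x,y)$ is homogeneous of degree zero in $(x,y)$, so $j(\xi x_1,\xi y_1)=j(x_1,y_1)$: if some $\mP\notin S_F$ divided the conductor of $E_{\xi x_1,\xi y_1}$, Proposition~\ref{semaway} would force multiplicative reduction there, hence $v_\mP(j)<0$, and therefore $E_{x_1,y_1}$ would have bad (potentially multiplicative) reduction at $\mP$, contradicting the hypothesis on its conductor. You instead read off from the degrees $2$ and $3$ of $H$ and $G$ that $E_{\lambda x,\lambda y}$ is the quadratic twist of $E_{x,y}$ by $\lambda$, and argue through the ramification of the twisting extension $K(\sqrt{\xi})/K$; this is the same underlying invariance repackaged, but making the twist structure explicit buys you the clean by-product that $v_\mP\bigl((x_1,y_1)\bigr)$ is even at every $\mP\notin S_F$. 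One remark: the auxiliary element $\mu$ is unnecessary. Since $\gcd(\xi x_1,\xi y_1)\in\cH_K$ (resp.\ $=\cO_K$ when $h_K=1$), Proposition~\ref{semaway} already applies directly to $E_{\xi x_1,\xi y_1}$ at every $\mP\notin S_F$; since this curve is the quadratic twist of $E_{x_1,y_1}$ (which has good reduction at $\mP$) by $\xi$, a ramified twist would produce additive reduction, contradicting that semistability, so $v_\mP(\xi)$ is automatically even and the twist unramified. The parity claim for $v_\mP\bigl((x_1,y_1)\bigr)$ then drops out without the detour through $\mu$.
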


\begin{proof} We only threat the case in which $K$ has non-trivial class group, as the proof for the latter case follows obviously from the former. As a consequence of cancelling denominators, it is obvious that we can scale the pair $(x_1,y_1)$ by some non-zero $\xi_1 \in \cO_K$ such that $\xi_1 x_1, \xi_1 y_1$ are integral. Recall that in \eqref{cH}, we defined $\cH_K = \{ \cm_1, \dots, \cm_h \} $ and therefore $[\gcd(\xi_1 x_1, \xi_1 y_1)]$ must be equal to $[\mathfrak m_i]$, for some $i \in \{ 1, \dots, h\}$.  Hence, there exists $\xi_2 \in K^{\times}$ such that $\xi_2 \cdot \gcd(\xi_1 x_1, \xi_1 y_1) = \mathfrak m_i$.
	
	We set $\xi := \xi_2 \cdot \xi_1$ and let $(x_2,y_2)= \xi \cdot (x_1, y_1) \in \mathcal O_K^2$.
	Suppose that $\mP \not\in S_F$ is a prime ideal. By the previous proposition, we know that $E_{x_2,y_2}$ is semistable at $\mathfrak P$. If $\mP$ divides the conductor of $E_{x_2,y_2}$ then it must be a prime of multiplicative reduction. This implies the fact that $v_{\mP}(j(x_2,y_2)) <0$. But since
	$$ j(x_2,y_2)= \frac{-2^8 \cdot H(x_2,y_2)^3}{\Delta_F \cdot F(x_2,y_2)^2} = \frac{-2^8 \cdot \xi^6 \cdot  H(x_1,y_1)^3 }{\Delta_F \cdot \xi^6 \cdot F(x_1,y_1)^2}= j(x_1,y_1),$$
	$\mP$ must be a prime of multiplicative reduction for $E_{x_1,y_1}$, which is a contradiction since the conductor of this curve is supported only on $S_F$.   
\end{proof}

\begin{lem} \label{conds} Let $F \in \cO_{K}[x,y]$ be an irreducible binary cubic with corresponding family of Frey curves $E_{x,y}$. Write $j(x,y)$ for the $j$-invariant of $E_{x,y}$. Let $E/K$ be an elliptic curve whose conductor $\mathcal N$ is supported on the set $S_F$.  If $j(E)=j(x_1,y_1)$ for some some $x_1,y_1 \in \cO_{K}$ that are coprime outside $S_F$, then $F(x_1,y_1) \in \cO_{K,S_F}^*$. 	\end{lem}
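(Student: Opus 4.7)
The plan is to prove the contrapositive at each prime: I fix an arbitrary prime ideal $\mP \notin S_F$ and show $v_\mP(F(x_1,y_1)) = 0$; since $F(x_1,y_1) \in \cO_K$ already, this yields $F(x_1,y_1) \in \cO_{K,S_F}^{*}$ (provided $F(x_1,y_1) \neq 0$, which holds because $j(E) = j(x_1,y_1)$ is finite). So suppose, for contradiction, that some such $\mP$ satisfies $v_\mP(F(x_1,y_1)) \geq 1$.

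The first step is to control $v_\mP(H(x_1,y_1))$. By Proposition \ref{resultant} applied with $k = 3$, $\Res(H,F) = -\Delta_F^2$, and since $\mP \notin S_F$ we have $\mP \nmid 2 \Delta_F$, so $v_\mP(\Res(H,F)) = 0$. A standard consequence of the Bezout-type identity for the resultant of two binary forms is that any prime which divides both $F(x_1,y_1)$ and $H(x_1,y_1)$ while not dividing $\gcd(x_1,y_1)$ must divide $\Res(F,H)$; concretely, there exist $A,B \in \cO_K[x,y]$ with $AF + BH = \Res(F,H)\, y^{d+e-1}$ and a symmetric identity with $x^{d+e-1}$. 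By hypothesis $x_1,y_1$ are coprime outside $S_F$, hence $\mP \nmid \gcd(x_1,y_1)$, and combining this with $v_\mP(\Res(H,F)) = 0$ forces $v_\mP(H(x_1,y_1)) = 0$.

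The second step uses the $j$-invariant formula \eqref{jxy}:
$$v_\mP(j(x_1,y_1)) = 3\, v_\mP(H(x_1,y_1)) - v_\mP(\Delta_F) - 2\, v_\mP(F(x_1,y_1)) = -2\, v_\mP(F(x_1,y_1)) < 0,$$
using $v_\mP(\Delta_F) = 0$ once more. Therefore $v_\mP(j(E)) = v_\mP(j(x_1,y_1)) < 0$, which means $E$ has potentially multiplicative reduction at $\mP$; in particular $\mP$ divides the conductor $\cN$ of $E$. This contradicts the hypothesis that $\cN$ is supported on $S_F$.

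There is no genuine obstacle here: the lemma is a bookkeeping exercise combining the syzygy-controlled numerology of the Frey family with the resultant relation between $F$ and $H$. The only subtlety worth stating carefully is the Bezout-type identity invoked above, which justifies passing from the coprimality of $x_1,y_1$ outside $S_F$ to the non-vanishing of $v_\mP(H(x_1,y_1))$; everything else is arithmetic of valuations and the well-known criterion that negative $v_\mP(j)$ implies potentially multiplicative reduction.
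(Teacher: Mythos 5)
Your proof is correct and follows essentially the same approach as the paper: assume a prime $\mP \notin S_F$ divides $F(x_1,y_1)$, use the resultant identity $\Res(H,F) = -\Delta_F^2$ from Proposition \ref{resultant} together with coprimality of $x_1,y_1$ outside $S_F$ to show $\mP \nmid H(x_1,y_1)$, deduce $v_\mP(j(E)) < 0$, and conclude that $E$ has potentially multiplicative reduction at $\mP$, contradicting that $\cN$ is supported on $S_F$. You are a bit more explicit than the paper in spelling out the Bezout-type identity $AF + BH = \Res(F,H)\,y^{d+e-1}$ (and its $x$-counterpart) that makes the coprimality hypothesis do its work — a step the paper's proof leaves implicit.
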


\begin{proof}
	Suppose that $j(E)=j(x_1,y_1)$ for some $x,y \in \cO_{K}$ that are coprime outside $\mathcal H_K$. Assume that $F(x_1,y_1) \not \in \cO_{K,S}^*$. There is a prime $\mP \not \in S_F$ such that $\mP \mid F(x_1,y_1)$. From the explicit equation \eqref{jxy} for $j(x_1,y_1)$ and the resultant identity from Proposition \ref{resultant} we deduce that $\mP$ (since it does not divide $\Delta_F$ by definition) cannot divide $H(x_1,y_1)$, so $v_{\mP}(j(E)) < 0$. This implies that $E$ has potentially multiplicative reduction at $\mP$ and hence $\mP \mid \mathcal N$, a contradiction.
\end{proof}

\section{An effective Chebotarev theorem}

In this section, we extend the main result in Section 7 of \cite{bendah} to general number fields. More precisely, given two elliptic curves $E_1,E_2$ defined over $K$ such that the $G_K$ modules $E_1[2]$ and $E_2[2]$ are not isomorphic, we would like to effectively bound the norm of a prime ideal $\ml$ such that traces of Frobenii $a_{\ml}(E_1)$ and $a_{\ml}(E_2)$ are distinct. We follow the exposition of the  aforementioned section in \textit{loc.\ cit}. 

Given a Galois extension of number fields $L/K$ and a prime ideal $\mathfrak l$ of $K$ which is unramified in $L/K$, we write $\left[ \frac{L/K}{\ml} \right]$ for the conjugacy class in $\Gal(L/K)$ consisting of the Frobenius elements at $\mathfrak l$.
\begin{thm}[Theorem 1.1 in \cite{Lagarias79}] \label{lagarias} There is an absolute, effectively computable, constant $A$ such that for every finite extension $K$ of $\mathbb Q$, every finite
	Galois extension $L$ of $K$ and every conjugacy class $C$ of $\Gal(L/K)$, there exists a prime ideal $\mathfrak l$ of $K$ which is unramified in $L$, for which $\norm_{K/\mathbb Q}(\ml)$ is a rational prime such that
	$$\norm_{K/\Q}(\ml) \leq 2d_L^{A} \text { and } \left[ \frac{L/K}{\ml} \right]= C.$$	
\end{thm}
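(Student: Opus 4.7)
The plan is simply to cite this directly: the statement is Theorem 1.1 of \cite{Lagarias79}, so no new argument is needed in the present paper. Nevertheless, the underlying proof in that reference is a substantial piece of analytic number theory, and I would sketch what it does as motivation rather than reproduce it. The starting point is the standard decomposition
\[
\pi_C(x,L/K) \;=\; \frac{|C|}{|G|}\sum_{\chi} \overline{\chi(C)}\, \pi(x,\chi),
\]
where $G=\Gal(L/K)$, $\chi$ ranges over the irreducible characters of $G$, and $\pi(x,\chi)$ is the Artin-weighted prime-counting function attached to the Artin $L$-function $L(s,\chi,L/K)$. Positivity of $\pi_C(x)$ for $x$ as small as possible in terms of $d_L$ would give the desired unramified prime.

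The heart of the proof is an explicit formula expressing $\pi(x,\chi)$ in terms of the non-trivial zeros of $L(s,\chi,L/K)$, together with a zero-free region and an error term with fully effective constants. Since the analytic continuation of Artin $L$-functions is not known in general, one first invokes Brauer's induction theorem to reduce each $L(s,\chi,L/K)$ to a product and quotient of Hecke $L$-functions over intermediate fields $K\subseteq F\subseteq L$; these Hecke $L$-functions do admit full analytic continuation and functional equations, and their conductors and discriminants are controlled in terms of $d_L$. One then applies effective zero-density estimates and a Deuring--Heilbronn type phenomenon to the family of Hecke $L$-functions arising from $G$.

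The genuine obstacle is the possibility of an exceptional (Siegel) real zero of some real-character Hecke $L$-function close to $s=1$: such a zero could push the least prime with Frobenius in $C$ arbitrarily far to the right. Lagarias--Odlyzko handle this by isolating at most one such zero per field, combining Stark-type lower bounds for $L(1,\chi)$ with the repulsion property of Siegel zeros, so that in the worst case one still obtains $N_{K/\Q}(\ml)\leq 2\, d_L^{A}$ for some absolute and effectively computable $A$. In the present paper I would simply record this as a black box: the exponent $A$ is absolute, and the only input I need downstream is the dependence of the bound on $d_L$ rather than any finer feature of the proof.
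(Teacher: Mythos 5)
You correctly observe that this statement is simply quoted verbatim as Theorem~1.1 of the cited reference (which is in fact the Lagarias--Montgomery--Odlyzko paper \emph{A bound for the least prime ideal in the Chebotarev density theorem}, Invent.\ Math.\ 54 (1979) --- Montgomery is a coauthor, a minor attribution slip in your write-up), so the present paper supplies no proof and none is required. Your background sketch --- the character decomposition of $\pi_C(x,L/K)$, Brauer induction to reduce Artin $L$-functions to Hecke $L$-functions, an effective explicit formula with zero-free region, and the Deuring--Heilbronn repulsion phenomenon to control a possible exceptional real zero --- is a fair summary of that reference's strategy, but it is context rather than a proof the paper needs or gives.
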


We will need the following result about the norm of the smallest prime ideal in a given ideal class, which is an easy consequence of \cite{sardari}*{Theorem 1.8}.

\begin{thm} \label{sardari} Given a number field $K$ and a finite set of prime ideals $S$ of $\cO_K$, there exists an effective constant $C_{K,S} >0$, depending only on $K$ and the set $S$, such that every ideal class of $K$ contains a prime ideal $\mP \not \in S$ such that $\norm_{K/\Q}(\mP) < C_{K,S}$.	
\end{thm}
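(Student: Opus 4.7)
The plan is to derive the statement as a direct application of Sardari's Theorem 1.8, with an auxiliary step to guarantee avoidance of the finite set $S$. Sardari's result provides an effective constant $C_K$, depending only on $K$, such that every ideal class in $\Cl(K)$ contains a prime ideal of norm at most $C_K$. If $S$ were empty we would simply take $C_{K,S}=C_K$, so the whole issue is how to dodge the finitely many "bad" primes in $S$.

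The cleanest route is to work in a ray class group rather than in $\Cl(K)$ directly. First I would set $\mathfrak{m}=\prod_{\fp \in S}\fp$ and consider the ray class group $\Cl_{\mathfrak{m}}(K)$, together with the canonical surjection $\pi\colon \Cl_{\mathfrak{m}}(K)\twoheadrightarrow \Cl(K)$. For each class $c\in \Cl(K)$, choose any lift $\tilde c \in \pi^{-1}(c)$. Applying Sardari's theorem (or, equivalently, the effective Chebotarev theorem of Lagarias--Odlyzko in the ray class field $H_{\mathfrak{m}}$ quoted as Theorem~\ref{lagarias} above) to $\tilde c$ produces a prime ideal $\mathfrak{P} \in \tilde c$ whose norm is bounded in terms of $K$ and the conductor $\mathfrak{m}$, i.e.\ in terms of $K$ and $S$. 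By construction such a $\mathfrak{P}$ is coprime to $\mathfrak{m}$, hence lies outside $S$, and $\pi(\tilde c)=c$ so $\mathfrak{P}$ represents the desired class.

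An alternative, more elementary route would be to use the effective prime ideal theorem in a fixed ideal class: the number of prime ideals of norm at most $X$ in a given class of $\Cl(K)$ is $\sim X/(h_K\log X)$, with an effective error term. Choosing $X$ large enough so that this count exceeds $|S|$ (an effective threshold depending only on $K$ and $|S|$) forces the existence of at least one prime in the class with norm below $X$ and outside $S$; taking $C_{K,S}$ to be this threshold finishes the argument.

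I expect the main (minor) obstacle to be bookkeeping the dependence of the constant on $S$: in the ray class field approach one must check that Sardari's bound transports from $\Cl(K)$ to $\Cl_{\mathfrak{m}}(K)$ with a controlled penalty in terms of $\mathbf{N}\mathfrak{m}$ and the discriminant of $H_{\mathfrak{m}}$, while in the counting approach one must make the error term in the prime ideal theorem genuinely effective. Either way the final bound $C_{K,S}$ is explicit in $K$ and $S$, which is all that is needed for the applications in Section~5.
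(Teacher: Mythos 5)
Your second, ``more elementary'' route is in fact exactly what the paper does: the paper invokes Sardari's Theorem~1.8 as giving an effective lower bound on the number of prime ideals of norm at most $C$ lying in a fixed ideal class, and then simply enlarges $C$ until that count exceeds $|S|$, which forces at least one such prime to lie outside $S$. Nothing more is needed, so that part of your proposal matches the reference.

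Your primary ray-class route, however, has a genuine gap and not merely bookkeeping. Theorem~\ref{lagarias} applied to $L=H_{\mathfrak m}$ and the conjugacy class attached to a lift $\tilde c$ returns a prime $\ml$ of $K$ that is \emph{unramified in $H_{\mathfrak m}$} and has the prescribed Frobenius; it does not return a prime coprime to $\mathfrak m$. These are not the same condition: the conductor $\ff$ of $H_{\mathfrak m}/K$ can be a proper divisor of $\mathfrak m$, and then a prime $\fp\in S$ with $\fp\nmid\ff$ is unramified in $H_{\mathfrak m}$ and is a perfectly legal output of Lagarias' theorem. For a concrete instance take $K=\Q(\sqrt{10})$, $\fp=(2,\sqrt{10})$ and $S=\{\fp\}$. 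Here $(\cO_K/\fp)^{\times}$ is trivial, so the exact sequence $\cO_K^{\times}\to(\cO_K/\fp)^{\times}\to\Cl_{\fp}(K)\to\Cl(K)\to 1$ gives $\Cl_{\fp}(K)\cong\Cl(K)\cong\Z/2\Z$ and $H_{(\fp)}$ is just the Hilbert class field. The prime $\fp$ is unramified there (the Hilbert class field is everywhere unramified over $K$), its class $[\fp]$ is the nontrivial one, and so $\fp$ itself is a valid Frobenius representative of the very conjugacy class you would hand to Lagarias' theorem --- yet $\fp\in S$. Thus ``$\mathfrak P\in\tilde c$'' in your write-up is an unjustified leap from ``$\Frob_{\mathfrak P}$ corresponds to $\tilde c$''. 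One could repair this by raising the exponents in $\mathfrak m$ until every prime of $S$ genuinely ramifies (note that adding one extra power need not suffice; in the example above even $\mathfrak m=\fp^{2}$ still gives $\Cl_{\fp^{2}}(K)=\Cl(K)$), or by twisting with an auxiliary quadratic extension ramified at $S$ as the paper does in the proof of Theorem~\ref{efcheb}; but the cleanest fix is simply to use the counting argument you already have in reserve, which is the paper's proof.
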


Although Theorem 1.8 in \cite{sardari} is stated assuming the Generalised Riemann hypothesis, the statement that we wrote holds without assuming it. Using GRH, the author of \textit{loc.\ cit.} achieves better bounds for the norm of the sought after prime ideal. He obtains a lower bound on the density of primes with norm smaller than a constant $C$ in every ideal class group, observing that by choosing $C$ large enough the number of such primes must be greater than one. We actually have to make sure that we find a prime outside of a fixed set $S$, so we need to increase the constant $C$ such that the number of prime ideals is strictly greater than $|S|$. Under GRH, one can take $C_{K,S} = A \cdot \max(2|S|h_K, (h_K \log(d_K))^2)$ where $A$ is an implicit constant (explicitly computable) in \cite{sardari}*{inequality (3.17)}, $h_K$ is the class number and $d_K$ is the absolute discriminant of the number field $K$. It was communicated to us via e-mail by Sardari that without using GRH, one can obtain a polynomial bound in terms of $|S|d_K$ for the constant $C_{K,S}$.

\begin{thm} \label{efcheb} Given $L/K$ a fixed Galois extension of number fields and a finite set $S$ of prime ideals in $\cO_K$, there exists a constant $A>0$ such that for every conjugacy class $C$ of $\Gal(L/K)$, there is a prime ideal $\ml \not \in S$ of $\cO_{K}$  for which $$ \norm_{K/\mathbb Q}(\ml) \leq A \text{ and } \left[ \frac{L/K}{\ml} \right] = C.$$
	The constant $A$ is explicitly computable and depends only on $L,K$ and $S$.
	
\end{thm}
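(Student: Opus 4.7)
The plan is to strengthen the existence statement of Theorem \ref{lagarias} into a counting statement via the unconditional effective form of the Chebotarev density theorem, and then to choose the bound large enough that pigeonhole forces one of the primes found in each conjugacy class to lie outside the finite set $S$.

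More precisely, for a conjugacy class $C \subseteq \Gal(L/K)$ I would consider the prime counting function
$$\pi_C(x) := \# \left\{ \ml \subset \cO_K \ \text{prime} \,:\, \ml \text{ unramified in } L, \ \norm_{K/\Q}(\ml) \leq x, \ \left[ \tfrac{L/K}{\ml} \right] = C \right\}.$$
The effective Chebotarev density theorem of Lagarias and Odlyzko (the quantitative refinement of Theorem \ref{lagarias}, from the same paper \cite{Lagarias79}) supplies an explicit constant $B_0 = B_0(L,K)$, depending only on $d_L$ and $[L:\Q]$, such that for all $x \geq B_0$ and every conjugacy class $C$ of $\Gal(L/K)$ one has
$$\pi_C(x) \geq \frac{|C|}{2\,|\Gal(L/K)|}\operatorname{Li}(x).$$

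Next I would take $A$ large enough that simultaneously $A \geq B_0$ and $\operatorname{Li}(A) > 2|S|\cdot|\Gal(L/K)|$. Since $|C| \geq 1$, these two conditions together guarantee $\pi_C(A) > |S|$ for every conjugacy class $C$. Pigeonhole then delivers, for each $C$, at least one prime $\ml \notin S$ of $\cO_K$ with $\norm_{K/\Q}(\ml) \leq A$ and $\left[\tfrac{L/K}{\ml}\right] = C$. Because $B_0$ and the inequality controlling $\operatorname{Li}(A)$ depend only on $L$, $K$ and $|S|$, the resulting constant $A$ is effectively computable, as required.

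The hard part is the lower bound for $\pi_C(x)$ itself: unconditionally (that is, without invoking GRH) it relies on careful treatment of possible Siegel zeros of the Hecke $L$-functions attached to the characters of $\Gal(L/K)$, and the resulting $B_0$ is a rapidly growing function of $d_L$ and $[L:\Q]$. The constant $A$ produced by this argument is therefore practically enormous, though fully effective in the sense demanded by the theorem.
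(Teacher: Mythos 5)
Your argument is correct and it takes a genuinely different route from the paper's. You obtain the prime by counting: you invoke the quantitative, unconditional form of the Lagarias--Odlyzko effective Chebotarev theorem to force $\pi_C(A)>|S|$ and then discard the at most $|S|$ bad primes by pigeonhole. The paper instead removes $S$ algebraically. It forms $\Sprime\subseteq S$, the set of primes of $S$ that do \emph{not} ramify in $L/K$, uses Theorem \ref{sardari} to produce a squarefree $t\in\cO_K$ supported on $\Sprime$ and one auxiliary prime of controlled norm, and builds the quadratic extension $K(\sqrt{t})$, which is ramified at every prime of $\Sprime$. Since $L\cap K(\sqrt{t})=K$, the compositum $\Lprime=LK(\sqrt{t})$ satisfies $\Gal(\Lprime/K)\cong\Gal(L/K)\times\Gal(K(\sqrt{t})/K)$, and one applies only the \emph{existence} form (Theorem \ref{lagarias}) to $\Lprime/K$ with the conjugacy class $C\times g_t$; any prime achieving this Frobenius is unramified in $\Lprime/K$, hence automatically lies outside $S$. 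The trade-offs are worth noting. Your counting route is conceptually simpler and produces a constant $A$ that depends only on $d_L$, $[L:\Q]$ and $|S|$ (not on which primes are in $S$), but the threshold $B_0$ must absorb both the $x\exp\bigl(-c\sqrt{(\log x)/[L:\Q]}\bigr)$ error term and an effective (Stark-type) lower bound on $1-\beta_0$ for a possible exceptional zero, so the resulting $A$ is astronomically larger than the paper's, which stays within the existence form and gives a bound of the polynomial shape $2\,d_{\Lprime}^{A}$. The price the paper pays is that $d_{\Lprime}$, and hence its constant, depends on the specific primes in $S$ through the element $t$. Both arguments yield an effectively computable $A$, so both establish the theorem; you should, however, make explicit in a careful write-up that the bound on a possible Siegel zero used to clear the $\operatorname{Li}(x^{\beta_0})$ term must itself be taken in an effective (rather than Siegel's ineffective) form.
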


\begin{proof}
	Let $\Sprime$ be the subset of $S$ consisting of all the prime ideals from $S$ that do not ramify in $L/K$. If $\Sprime$ is empty, the conclusion follows by applying Theorem \ref{lagarias} to the extension $L$ directly.  Define the ideal $\cm = \prod_{\mP \in \Sprime} \mP $. By Theorem \ref{sardari}, we know that there is a constant $C_{K,S}$ and a prime ideal $\mathfrak a$ of norm less than $C_{K,S}$ that lies in $[\cm]^{-1}$, the inverse class of $\cm$ in the ideal class group of $K$. If we denote by $t \in \cO_K$ a generator of the principal ideal $\cm \cdot \mathfrak a=(t)$, then the quadratic extension $K(\sqrt{t})/K$ is unramified at the primes not dividing $2t$. The norm of its discriminant is bounded in terms of $K$ and the set $S$.
	
	The extensions $L/K$ and $K(\sqrt t)/K$ are Galois. We have the inclusion $ K \subseteq L\cap K(\sqrt t) \subseteq K(\sqrt t)$, which together with the fact that $L\cap K(\sqrt t) / K$ is unramified at $\mathfrak a$ implies that $L \cap K(\sqrt t) = K$. As a consequence, the compositum  $\Lprime := L K(\sqrt t)$ is such that
	$$L'/K \text{ is Galois and } \Gal(L'/K) \cong \Gal(L/K) \times \Gal(K(\sqrt t)/K).$$
	
	Let us now pick $g_{t}$, the non-identity element of the group $\Gal(K(\sqrt{t})/K)$. Applying Theorem \ref{lagarias} above, one obtains a prime ideal $\ml$ of $\cO_K$ such that $\ml$ is unramified in $\Lprime/K$, $\norm_{K/ \Q}(\ml) \leq 2 \left( d_{\Lprime} \right)^A$ and
	$$ \left[\frac{\Lprime/K}{\ml}  \right] = C \times g_{t} \text{ as a conjugacy class of } \Gal(\Lprime/K).$$
	
	Firstly one observes that since $\Lprime/K$ is ramified at the primes in $S$, the ideal $\ml$ does not belong to  $S$.  Also, $\ml$ does not ramify in the extension $L/K$ and $$\left[ \frac{L/K}{\ml} \right]=C.$$ Finally, as a consequence of the formula $d_{\Lprime} = d_{L}^{2} \cdot \norm_{L/\Q}\left( \Delta_{\Lprime/L} \right)$, the absolute discriminant $d_{\Lprime}$ depends on the fields $K$, $L$ and the primes in the set $S$ and can be computed effectively.

\end{proof}
Using the theorem above, it becomes immediately clear that \cite{bendah}*{Proposition 7.4} holds general number fields. For brevity, we include its proof here.
\begin{prop} \label{propcheb} Let $p$ be a rational prime and $E_1/K$ and $E_2/K$ elliptic curves with conductors $\cN_1$ and $\cN_2$, respectively, where $\cN_1 \mid \cN_2$. Write $\rho_i = \overline{\rho}_{E_i,p}$ for $i=1$ and $2$.  Suppose that $\rho_2$ is unramified outside primes dividing $pN_1$ and that $\rho_2$ is irreducible. If $\rho_1 \not \simeq \rho_2$, then there exists a prime $\mathfrak l \subset \cO_K$ with $\mathfrak l \nmid p\cN_1$, for which both
	$$\Tr(\rho_1(\Frob_{\mathfrak l}))\not \equiv \Tr(\rho_2(\Frob_{\mathfrak l})) \pmod{p}$$
	and
	$$l < A$$
	where $l$ is the rational prime $\mathfrak l$ lies over and $A$ is an effectively computable constant depending only on $K, \cN_1$ and $p$. 	
\end{prop}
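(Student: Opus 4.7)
The plan is to apply the effective Chebotarev density theorem (Theorem \ref{efcheb}) to the finite Galois extension $L/K$ cut out jointly by $\rho_1$ and $\rho_2$. First I would observe that $\rho_1$ is unramified outside the set of primes dividing $p\cN_1$, as is standard for the mod $p$ representation of an elliptic curve of conductor $\cN_1$; the same holds for $\rho_2$ by hypothesis. Let $L$ be the fixed field of $\Ker(\rho_1) \cap \Ker(\rho_2)$ inside $\overline{K}$; then $L/K$ is Galois, of degree at most $|\GL_2(\F_l)|^2$ (where here $l$ stands for the prime $p$ of the statement), and unramified outside primes dividing $p\cN_1$. A conductor-discriminant computation, controlling both tame and wild ramification at the primes of $\cO_K$ above $p$, yields an effective bound for the absolute discriminant $d_L$ in terms of $K$, $\cN_1$ and $p$ alone.

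Next, I would show that there is a conjugacy class of $\Gal(L/K)$ on which the two trace characters disagree modulo $p$. Since $\rho_2$ is irreducible and $\rho_1 \not\simeq \rho_2$, the semisimplifications $\rho_1^{ss}$ and $\rho_2^{ss}$ are non-isomorphic: if $\rho_1$ is already irreducible this is the hypothesis itself, while if $\rho_1$ is reducible its semisimplification is a sum of two characters and hence cannot be isomorphic to the irreducible representation $\rho_2$. By the Brauer--Nesbitt theorem, some $\sigma \in \Gal(L/K)$ satisfies $\Tr(\rho_1(\sigma)) \not\equiv \Tr(\rho_2(\sigma)) \pmod{p}$; because the trace is a class function, the same inequality persists throughout the entire conjugacy class $C$ of $\sigma$.

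To finish, I would apply Theorem \ref{efcheb} to $L/K$, the conjugacy class $C$, and the finite set $S$ of prime ideals of $\cO_K$ dividing $p\cN_1$. This produces a prime $\ml \not\in S$, unramified in $L/K$, whose Frobenius class equals $C$ and whose norm is bounded by an effectively computable constant $A$ depending only on $L$, $K$ and $S$, and therefore ultimately only on $K$, $\cN_1$ and $p$. The rational prime $l$ below $\ml$ then satisfies $l \leq \norm_{K/\Q}(\ml) \leq A$ (enlarging $A$ by $1$ if necessary to obtain a strict inequality), and by construction the traces of $\rho_1(\Frob_\ml)$ and $\rho_2(\Frob_\ml)$ are incongruent modulo $p$.

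The one genuinely delicate point is making the bound on $d_L$ fully explicit: the degree $[L:K]$ is controlled only in terms of $p$, but wild ramification of $L/K$ at primes above $p$ must be treated carefully, for example via Serre's bounds on conductor exponents for $l$-adic representations attached to elliptic curves. Once $d_L$ is effectively bounded in terms of $K$, $\cN_1$ and $p$, the conclusion follows by mechanically feeding the data into Theorem \ref{efcheb}.
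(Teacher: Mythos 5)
Your proof is correct and follows essentially the same route as the paper: both form the field $L$ cut out by the pair $(\rho_1,\rho_2)$ (your $\Ker\rho_1\cap\Ker\rho_2$ is exactly the kernel of the paper's product map $G_K\to\GL_2(\F_p)\times\GL_2(\F_p)$), invoke Brauer--Nesbitt to find a conjugacy class where the traces disagree mod $p$, and then feed $L/K$, that class, and the set $S$ of primes dividing $p\cN_1$ into Theorem~\ref{efcheb}. You are somewhat more explicit than the paper on two points the paper leaves tacit — spelling out the semisimplification step that makes Brauer--Nesbitt applicable when $\rho_1$ may be reducible, and flagging that $d_L$ must be bounded effectively in terms of $K$, $\cN_1$, $p$ so that the constant from Theorem~\ref{efcheb} has the advertised dependencies — and both of these are accurate observations rather than deviations.
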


\begin{proof}
	Consider the (continuous) homomorphism
	$G_K \to \GL_2(\F_p) \times \GL_2(\F_p)$
	given by
	$ \sigma \mapsto (\rho_1(\sigma), \rho_2(\sigma)).$
	Denote by $H$ its image and by $L$ the fixed field of its kernel. Then $L/K$ is finite Galois, unramified outside the set of primes dividing $p \cN_1$ and $\Gal(L/K) \cong H$.
	
	Brauer-Nesbitt together with the classical Chebotarev's density theorem guarantees the existence of such a prime $\mathfrak l$ whose Frobenius at $\mathfrak l$ is an element $(a,b) \in H$ such that
	$$\Tr(a) \not \equiv \Tr(b) \pmod{p}$$
	and by using $S = \{ \mP \subseteq \cO_K \mid \mP \text{ is prime and } \mP \mid p\cN_1 \}$ in Theorem \ref{efcheb} above, one gets the desired bound on $l$.
	
\end{proof}	

\section{The proof of Theorem \ref{GenNf}}
\label{seccomplex}
We would like to emphasize that the idea of considering a prime $\fq$ such that $\fq || \Delta_F$ and the computations carried out in \eqref{FoverFq} - \eqref{valqofj} are due to Bennett and Dahmen \cite{bendah}*{Appendix A.2}, who worked out the particular case $K=\Q$.

Our hypotheses imply that there are $a,b \in \cO_K$ such that
\begin{equation} \label{FoverFq}
F(x,y)= \alpha_0(x-ay)^2(x-by) \pmod{\fq}, \text{ for all } x, y \in \mathcal O_K.
\end{equation}
It is important to observe that $a \not \equiv b \pmod{\fq}.$ Indeed, suppose that is not the case and $a \equiv b \pmod{\fq}$. By using a linear translation, we can assume that $a \equiv b \equiv 0 \pmod{\fq}$, which is equivalent to $\alpha_1 \equiv \alpha_2 \equiv \alpha_3 \equiv 0 \pmod \fq$. Using the formula
\begin{equation} \label{disc3}
\Delta_F = 18 \alpha_0\alpha_1\alpha_2 \alpha_3 + (\alpha_1 \alpha_2)^2 - 27 (\alpha_0 \alpha_3)^2 - 4\alpha_0 \alpha_2^3 - 4 \alpha_1^3 \alpha_3
\end{equation}
for the discriminant of a binary cubic, this would imply that $\fq^2 \mid \Delta_F$, which is excluded in our hypothesis. So $a \not \equiv b \pmod \fq$ indeed. The formula in \eqref{FoverFq} implies that
\begin{equation} \label{HoverFq}
H(x,y) \equiv - \alpha_0^2 (a-b)^2(x-ay)^2 \pmod{\fq}, \text{ for all } x,y \in \mathcal O_K.
\end{equation}

Suppose that $x_0,y_0, z_0 \in \cO_K$ is a solution to \eqref{superelliptic} such that $\gcd(x_0,y_0,z_0)$ is supported only on $S_F$ and $\fq \nmid z_0$. We will prove that the Frey curve $E := E_{x_0,y_0}$ constructed as in \eqref{freyxy} has potentially multiplicative reduction at $\fq$. The discriminant
$\Delta(x_0,y_0)= 2^4 \cdot \Delta_F \cdot F(x_0,y_0)^2$ is clearly divisible by $\fq$. The $j$-invariant of $E$ can be expressed as
\begin{equation} \label{jinvar3}
j(x_0,y_0)= - \frac{2^8 \cdot H(x_0,y_0)^3}{\Delta_F \cdot F(x_0,y_0)^2}
\end{equation}

If $\fq \mid H(x_0,y_0)$, then from \eqref{HoverFq} we get that $\mathfrak q \mid x_0-ay_0$. But this would imply that $\fq \mid F(x_0,y_0)=z_0^l$, which is not allowed. Therefore, $\fq \nmid H(x_0,y_0)$ and
\begin{equation} \label{valqofj} v_{\mathfrak q}(j(x_0,y_0))= -1 - 2l \cdot v_{\mathfrak q}(z_0) =-1.
\end{equation}
This means that, in particular, $E$ has potentially multiplicative reduction at $\fq$.

Write $\overline{\rho}_{E,l}$ for the residual Galois representation $\overline{\rho}_{E,l}: G_K \to \Aut(E[l]) \cong \GL_2(\F_l)$ induced by the action of $G_K$ on the $l$-torsion of $E$. We prove that $\rhobar_{E,l}$ satisfies the hypothesis of Serre's conjecture starting by proving its absolute irreducibility. 

Let $L$ be the Galois closure of $K$. Denote by $\cO_L$ the ring of integers of this number field and by $\fq_L$ a prime above $\fq$. The base change of $E$ to $L$ has potentially multiplicative reduction at $\fq_L$ and \cite[Proposition 6.1]{ASiksek} guarantees the existence of a constant $B_{L,\fq_L}$ such that if $l> B_{L,{\fq_L}}$ the restriction  $\rhobar_{E,l}|_{G_L} : G_L \to \GL_2(\F_{l})$ is irreducible. Eventually increasing $l$ such that $l > v_{\fq_L}(\Delta_F \mathcal O_L)$, from the formulas (\ref{jinvar3}, ~\ref{valqofj}) we see that $l \nmid v_{\fq_L}(j(x_0,y_0))$. Using Lemma 5.1 in \cite{ASiksek}, we obtain that $l \mid \# \rhobar_{E,l}(I_{\fq_L})$, where $I_{\fq_L} \leqslant G_L$ is the inertia subgroup corresponding to $\fq_L$. It is known that every irreducible subgroup of $\GL_2(\F_l)$ which has an element of order $l$ contains $\SL_2(\F_l)$.

As a consequence of the Weil pairing we have that $\det(\rhobar_{E,l}) = \chi_l$, the mod $l$ cyclotomic character. By eventually increasing $l$ such that $L \cap \mathbb Q(\zeta_l) = \Q$, we can ensure that $\det(\rhobar_{E,l}|_{G_L})$ is surjective and together with the observations above this implies the surjectivity of $\rhobar_{E,l}|_{G_L}$. Running through all the prime ideals of $\cO_L$ above $\fq$ we observe that there exists a constant $B_{K,\fq}$ that depends only on $K$ and $\fq$, such that  if $l> B_{K,\fq}$ then $\rhobar_{E,l}$ is surjective.

Our condition that $\gcd(x_0,y_0,z_0)$ is supported on primes contained in $S_F$ implies that if $\mP \notin S_F$ divides $F(x_0,y_0)$, then $\mP \nmid \gcd(x_0,y_0)$. By Proposition \ref{semaway} we see that $E$ is semistable at such primes $\mP$.  From results in \cite{Serre} it follows that the mod $l$ Galois representation $\rhobar_{E,l}$ is unramified away from $S_F \cup \{\mathfrak l | \mathfrak l \subseteq \cO_K \text{ is prime and } \mathfrak l \mid l \}$. In addition, at every prime $\ml \mid l$ the valuation of the discriminant of $E$ 
$$v_{\ml}( \Delta(x_0,y_0))=l \cdot v_{\ml}(z_0) \equiv 0 \pmod{l}.$$
This congruence translates into the technical condition that $\overline{\rho}_{E,l}$ is finite flat at $\ml$, required in the hypothesis of Conjecture \ref{conj:Serre}.
The Serre conductor $\cN$ (prime to $l$ part of its Artin conductor) of this representation is supported only on primes in $S_F$. We also know that $\mathcal N$ divides the conductor of $E$, therefore we can bound the exponent of $\mathfrak a $ in $\mathcal N$ using \cite[Theorem IV.10.4]{silver2}. We get
$$ v_{\mathfrak a}(\mathcal N) \leq 2 + 3v_{\mathfrak a}(3)+ 6 v_{\mathfrak a}(2) \leq 2+6\cdot |K:\Q|$$
for all prime ideals $\mathfrak a \in S_F$. The essential fact is that $\cN$ belongs to a finite set that depends only on the form $F$ and, of course, $K$.

The Galois representation $\rhobar_{E,l}$ satisfies all the hypothesis of Conjecture \ref{conj:Serre} and hence the latter implies the existence of a weight $2$ mod $l$ eigenform $\theta$ over $K$ of level $\cN$, such that for all primes $\mP$ coprime to $l \cN$, we have
$$\Tr(\rhobar_{E,l}(\Frob_{\mP})) = \theta(T_{\mP}).$$
Since there are only finitely many possible levels $\cN$ and the integral cohomology subgroups of $Y_0(\mathcal N)$ are known to be finitely generated, one can conclude that there is a constant $C_1$ that depends only on $K$ and the set $S_F$ such that by taking $l> C_1$ the cohomology subgroups $H^{i}(Y_0(\mathcal N), \Z)$ have trivial $l$ torsion for every $i \geq 1$. This implies that the $l$-torsion of every $H^{i}(Y_0(\mathcal N), \Z_{(l)})$ is trivial for all $i\geq 1$, hence we can guarantee that there exists a weight $2$ complex eigenform $\ff$ with level $\cN$ that is a lift of $\theta$ as explained in Section 2. This is the only ineffective step in our theorem, in the sense that although one can use algorithms to compute $C_1$ for an individual level $\mathcal N$ as it was done in \cite{turcas2018}, we do not know how to write down a formula for the constant $C_1$ in terms of $\mathcal N,F$ and $K$.

Since $\cN$ belongs to a finite set, the list of such possible eigenforms $\ff$ is finite and depends only on $K$. It follows from \cite[Lemma 7.2]{ASiksek} that there is a constant $C_2$ such that if we make sure that $l > C_2$, then the Hecke eigenvalues of $\ff$ belong to $\Z$. 
By Conjecture \ref{conj:ES}, $\ff$ corresponds to an elliptic curve $E_{\ff}$ of conductor $\cN$ or to a fake elliptic curve $A_{\ff}$ of conductor $\cN^2$. We observed earlier in this proof that $l \mid \# \rhobar_{E,l}(I_{\mathfrak q})$, therefore using Proposition \ref{fakeinertia}, we see that for $l > 24$ the latter situation cannot happen and for such primes $l$, $\ff$ corresponds to an elliptic curve $E_{\ff}$ of conductor $\cN$. It is worth mentioning that $E_{\ff}$ does not depend on the solution $(x_0,y_0,z_0,l)$ of the superelliptic equation \eqref{superelliptic}. On the other hand, for all primes $\mP \nmid l \cN$ we have
\begin{equation*}
\Tr(\rhobar_{E,l}(\Frob_{\mP})) = \Tr(\rhobar_{E_{\ff}}(\Frob_{\mP}))
\end{equation*}
which implies
\begin{equation} \label{conEanfF}
\#E( \cO_K/\mP) \equiv \#E_{\ff}(\cO_K/\mP) \pmod{l}.
\end{equation}
We will now prove that, for $l$ large enough, $E[2]$ and $E_{\ff}[2]$ are isomorphic as $G_K$ modules. From \cite{bendah}*{Proposition 6.8} it follows that, since the binary cubic $F$ is irreducible over $K$, the mod $2$ representation $\rhobar_{E,2}$ is also irreducible. Now if the $G_K$ modules $E[2]$ and $E_{\ff}[2]$ are not isomorphic, by Proposition \ref{propcheb} used with $p=2$ we obtain a prime ideal $\mP \subset O_K$, of norm that is bounded in terms of $K$ and $S_F$ such that $\#E(\mathcal O_K/\mP) \not \equiv \#E_{\ff}(\cO_K/\mP) \pmod{2}$. In particular, $ \#E(\mathcal O_K/\mP) - \#E_{\ff}(\mathcal O_K/\mP)$ is non-zero and bounded above in terms of $K$ and $S_F$ as a consequence of the Hasse bounds. Since $l$ divides $ \#E(\mathcal O_K/\mP) - \#E_{\ff}(\mathcal O_K/\mP)$, we infer that there exists a constant $C_3$ such that if $ l > C_3$ then $E[2]$ and $E_{\ff}[2]$ are isomorphic as $G_K$ modules.

In the terminology used by Fisher in \cite{FisherHes}, the elliptic curves $E=E_{x_0,y_0}$ and $E_{\ff}$ are said to be $2$-congruent. Proposition 6.2 in \cite{bendah} implies that $E_{\ff}$ is isomorphic over $K$ to a Frey curve $E_{x_1,y_1}$ for some $x_1,y_1 \in K$. In fact, by Proposition \ref{notchangeN} we can scale the pair $(x_1,y_1)$ such that $x_1,y_1 \in \mathcal O_K$, $\gcd(x_1,y_1) \in \mathcal H_K$ (or is trivial if the class group of $K$ is) and the conductor of $E_{x_1,y_1}$ is still supported only on $S_F$. 
We now make use of Lemma \ref{conds} to get that $F(x_1,y_1) \in \mathcal O_{K,S_F}^{*}$, a contradiction to \eqref{unitseq}.

All of the constants defined in this section depend only on $F$ and $K$, therefore if we choose $A_{K,F}$ to be larger than all of them the proof of our theorem is completed.

\section{$K$ totally real and the proof of Theorem \ref{treal}}
\label{secreal}

When $K$ is totally real, recall that an elliptic curve $E$ defined over $K$ is \textit{modular} if there exists a Hilbert cuspidal eigenform $\ff$ of parallel weight $2$, with rational Hecke eigenvalues, such that there is an isomorphism of compatible Galois representations \begin{equation}\rho_{E,l} \cong \rho_{\ff,l}. \end{equation}
The left-hand side of the above is the Galois representation arising from the action of $G_K$ on the $l$-adic Tate module $T_l(E)$, while the right-hand side is the Galois representation associated to $\ff$ by Taylor in \cite{taylorhilbert}. We are going to make use of the known fact \cite{Freitas2015}*{Theorem 5} that if an elliptic curve $E/K$ is not modular, then its $j$-invariant belongs to a finite set $\cW_K$ that depends only on the base field $K$. Since the finiteness of $\cW_K$ is obtained by applying Falting's theorem to curves of genus greater than one, unfortunately we cannot find the points in $\cW_K$ nor the cardinality of this set.

As it is anticipated in the title, we dedicate this section to proving Theorem \ref{treal}. Before we start the actual proof, we need the following lemma.

\begin{lem} \label{modular} Let $F \in \cO_K[x,y]$ be an irreducible binary cubic. There is a constant $C:= C_{K,F}>0$, depending only on $F$ and on the field $K$, such that the following statement holds:
	
	For all $x,y \in \mathcal O_K$, if there exists a prime $\mP \notin S_F$ such that $v_{\mP}(F(x,y)) \geq C$ and $\mP \nmid \gcd(x,y)$ then the Frey curve $E_{x,y}$ constructed as in \eqref{freyxy} is modular.
\end{lem}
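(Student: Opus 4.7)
The plan is to combine an explicit computation of $v_{\mP}(j(E_{x,y}))$ with the key input that, over a totally real field $K$, all but finitely many $\overline{K}$-isomorphism classes of elliptic curves over $K$ are modular (\cite{Freitas2015}*{Theorem 5}). Writing $\cW_K \subset K$ for the finite set of $j$-invariants of non-modular elliptic curves over $K$, it will suffice to show that when $v_{\mP}(F(x,y))$ is sufficiently large, $j(E_{x,y})$ cannot coincide with any of the finitely many elements of $\cW_K$; since modularity depends only on the $\overline{K}$-isomorphism class, this forces $E_{x,y}$ to be modular.

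The first step is to verify that, under the assumptions $\mP \notin S_F$ and $\mP \nmid \gcd(x,y)$, one has $v_{\mP}(H(x,y)) = 0$. Proposition \ref{resultant} yields $\Res(H, F) = -\Delta_F^2$, and the standard resultant identity for binary forms furnishes $A, B, C, D \in \cO_K[X, Y]$ with
\begin{equation*}
AF + BH = \pm\,\Delta_F^2\, X^4, \qquad CF + DH = \pm\,\Delta_F^2\, Y^4.
\end{equation*}
Any prime dividing both $F(x,y)$ and $H(x,y)$ must therefore divide $\Delta_F \cdot \gcd(x,y)$, which, by hypothesis, excludes $\mP$. Combining $v_{\mP}(H(x,y)) = v_{\mP}(\Delta_F) = 0$ with formula \eqref{jxy} now gives
\begin{equation*}
v_{\mP}(j(E_{x,y})) = 3v_{\mP}(H(x,y)) - v_{\mP}(\Delta_F) - 2v_{\mP}(F(x,y)) = -2v_{\mP}(F(x,y)) \leq -2C.
\end{equation*}

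The finiteness of $\cW_K$ now permits me to fix the integer
\begin{equation*}
N_0 := \min\{v_{\ml}(j_0) : j_0 \in \cW_K \setminus \{0\},\ \ml \subset \cO_K \text{ prime ideal}\},
\end{equation*}
which is well-defined because each nonzero $j_0 \in K$ has only finitely many primes of negative valuation, and $\cW_K$ itself is finite. Setting $C$ to be any positive integer with $-2C < N_0$ (a quantity depending only on $K$), one checks that no $j_0 \in \cW_K$ can equal $j(E_{x,y})$: if $j_0 \neq 0$ then $v_{\mP}(j(E_{x,y})) \leq -2C < N_0 \leq v_{\mP}(j_0)$, and $j_0 = 0$ is excluded because $v_{\mP}(j(E_{x,y}))$ is a finite negative integer. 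Hence $j(E_{x,y}) \notin \cW_K$, so $E_{x,y}$ is modular.

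The main obstacle, and the reason the statement will be only qualitative, is the ineffectiveness of the set $\cW_K$: \cite{Freitas2015}*{Theorem 5} obtains its finiteness by an application of Faltings' theorem to certain modular curves of genus greater than one, so neither $\cW_K$ nor the resulting constant $C$ can be exhibited explicitly. As already flagged in the introduction, this is precisely the source of the ineffectiveness of the constant $A_F$ in Theorem \ref{treal}.
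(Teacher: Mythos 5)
Your proof is correct and uses the same ingredients as the paper's: the finiteness of the exceptional set $\cW_K$ from \cite{Freitas2015}*{Theorem 5}, the explicit formula \eqref{jxy} for $j(E_{x,y})$, and the resultant identity $\Res(H,F)=-\Delta_F^2$ from Proposition \ref{resultant}. The only difference is cosmetic: you compute $v_{\mP}(H(x,y))=0$ up front and conclude directly that $j(E_{x,y})\notin\cW_K$, whereas the paper argues by contradiction (assuming $j\in\cW_K$, deducing $v_{\mP}(H)\geq 1$, and then invoking the resultant to force $\mP\mid\Delta_F$), so the two arguments are logically equivalent rearrangements of the same reasoning.
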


\begin{proof} From the irreducibility of $F$ it follows that $\Delta_F \neq 0$ and $F(x,y) \neq 0$, hence the elliptic curve $E_{x,y}$ is well-defined. Suppose that the curve $E_{x,y}$ is not modular. Without losing generality we can assume that $v_{\mP}(y)=0$. Let $H$ be the Hessian of $F$. Recall the formula \eqref{jxy} for the $j$-invariant from which 
	$$H(x,y)^3= - 2^{-8} \cdot \Delta_F \cdot j(x,y) \cdot F(x,y)^2 $$
	and therefore
	$$v_{\mP}(H(x,y)) = 2 v_{\mP}(F(x,y))/3 + v_{\mP}(j(x,y))/3.$$
	
	Since $j(x,y)$ belongs to the finite set $\cW_K$, we find that there exists a constant $B$, that depends only on $K$ such that $v_{\mP}(j(x,y))/3 \geq B$. Now, if we set $C := \max(1, -B/2+1)$, we observe that $\min(v_{\mP}(F(x,y)), v_{\mP}(H(x,y))) \geq 1.$ 
	
	Using the resultant identity in Proposition \ref{resultant}, we can see that $\mP \mid \Delta_F$ and therefore $\mP \in S_F$, a contradiction.   	
\end{proof}

Having all of this, let us get back to the proof of Theorem \ref{treal}.

Suppose $x_0,y_0,z_0 \in \mathcal O_K$ is a solution to the generalised superelliptic equation \eqref{superelliptic} and that $\gcd(x_0,y_0,z_0)$ is supported only on primes in $S_F$. To the pair $(x_0,y_0)$ we can attach an elliptic curve curve $E:= E_{x_0,y_0}$ as in \eqref{freyxy}.

The hypothesis of the theorem implies that there exists a prime ideal $\mP \notin S_F$ such that $\mP \mid z_0$. As $F(x_0,y_0)= z_0^l$, we know that $\mP \nmid \gcd(x_0,y_0)$ and we can apply Lemma \ref{modular} to see that for $l> C_1$, a constant depending only on $K$ and $F$, the elliptic curve $E$ is modular. Denote by $\cN$, the conductor of our elliptic curve. $\cN$ depends on the solution $x_0,y_0$, as $E$ does.

A major step in this proof is obtainining a Hilbert modular form $\ff$ of parallel weight $2$ whose $l$-adic Galois representation matches the one coming from the $l$-adic Tate module of $E = E_{x_0,y_0}$ such that the level of the form $\ff$ does not depend on the putative solution $x_0,y_0$. Such an object will arrise after applying Theorem 7 of  \cite{AFreitasSiksek}. The latter is a level lowering result, obtained from the combined works of Fujiwara, Jarvis and Rajaei, whose hypothesis requires that the residual Galois representation $\overline{\rho}_{E,l}$ is irreducible.

Proposition \ref{semaway} implies that for $l$ large enough such that it is not supported on primes in $S_F$, the elliptic curve $E$ is semistable at all primes above $l$. By eventually increasing $l$, we can assume that $l$ does not ramify in $K$.
Irreducibility of $\overline{\rho}_{E,l}$ follows from Theorem 2 of \cite{FreSikCriteria}. To be precise, from the just mentioned theorem it follows that there exists an explicit constant $C_2$, depending only on the number field $K$, such that for $l > C_2$, the representation $\overline{\rho}_{E,l}$ is irreducible.
For every prime ideal $\mP \notin S_F$ we know from the proposition mentioned at the beginning of this paragraph that the model of $E$ is minimal, semistable at $\mP$ and that $l \mid v_{\mP}(\Delta(x_0,y_0))$. Hence, by \cite{AFreitasSiksek} it follows that there are
\begin{itemize}
	\item a Hilbert modular form $\ff$ of parallel weight $2$ that is new at level $$\mathcal N_l = \prod\limits_{\mP \in S_F} \mP^{v_{\mP}(\mathcal N)},$$
	\item some prime ideal $\omega$ of the number field $\Q_{\ff}$ generated by the Hecke eigenvalues of $\ff$, such that $\omega \mid l$ and $\overline{\rho}_{E,l} \cong \overline{\rho}_{\ff,\omega}$.
\end{itemize}

As we discussed in the previous section, since $\cN_l$ divides the conductor $\cN$ of the elliptic curve $E$, the exponents of the primes dividing $\cN_l$ are bounded. The possible levels $\cN_l$ belong to a fixed finite set, hence $\ff$ belongs to a finite set of Hilbert modular forms, a set that depends only on the field $K$. From Lemma 7.2 in \cite{ASiksek} it follows that there exists a constant $C_3$, depending only on $K$ such that if $l > C_3$ then $\ff$ must have rational Hecke eigenvalues eigenvalues, i.e. $\Q_{\ff} = \Q$. For such a rational eigenform $\ff$, Conjecture \ref{conj:Serre} implies the existence of an elliptic curve $E_{\ff}$ of conductor $\cN_l$ that corresponds to $\ff$. In particular, for all primes $\mP \nmid l \cN_l$ we have that
$$\Tr(\rhobar_{E,l}(\Frob_{\mP})) = \Tr(\rhobar_{E_{\ff}}(\Frob_{\mP})),$$
which is equivalent to
$$\#E( \cO_K/\mP) \equiv \#E_{\ff}(\cO_K/\mP) \pmod{l}. $$
The reader should be aware that the final part of this proof is identical to the one presented at the end of Section \ref{seccomplex}. As it was pointed out previously, the irreducibility of $F$ implies that $\rhobar_{E,2}$ is irreducible.  Using Proposition \ref{propcheb} we observe that there exists a constant $C_4$ such that if $l> C_4$, then $E[2]$ and $E_{\ff}[2]$ are isomorphic as $G_K$ modules. Using the same result as in the previous section, namely \cite{bendah}*{Proposition 6.2}, we get that $E_{\ff}$ is isomorphic over $K$ to a curve in our Frey-Hellegouarch family, $E_{x_1,y_1}$ for some $x_1,y_1 \in K$. As explained in Proposition \ref{notchangeN}, we can scale the pair $(x_1,y_1)$ such that it becomes integral, $\gcd(x_1,y_1) \in \cH_K$ (it can be made trivial if $K$ has class number one) and the conductor of the Frey curve $E_{x_1,y_1}$ remains supported only on the primes in $S_F$. We now get a contradiction to the hypothesis of our theorem, since Lemma \ref{conds} implies $F(x_1,y_1) \in \cO_{K,S_F}^{\times}$.

All four constants defined in this section depend only on the form $F$ and the number field $K$. We conclude the proof of the theorem by choosing the constant $A_F$ to be greater than all these constants.

\medskip


\noindent \textbf{Remark.} The only ``ineffective" step in this section is the application of Lemma \ref{modular}, which guarantees that for $l$ large enough, the Frey-Hellegouarch curves we care about are modular.

\subsection*{Acknowledgements}
We are indebted to the anonymous referee for carefully reading this paper and for many helpful remarks. The author is very grateful to his advisor Samir Siksek for suggesting the problem and for his great support. It is also a pleasure to thank  Oleksiy Klurman, Vlad Matei, Naser Sardari and Haluk \c Seng\" un for useful discussions.


\normalsize

\bibliographystyle{amsplain}
\bibliography{perf-pow}

\end{document}